\theoremstyle{plain}
\numberwithin{equation}{section}
\newcommand{\gl}{\operatorname{GL}}
\newcommand{\tr}{\operatorname{tr}}
\newcommand{\End}{\operatorname{End}}
\newcommand{\g}{\mathfrak{g}}
\newcommand\Endo{\textup{End}}
\newcommand{\Z}{\mathbb{Z}}
\newcommand\N{\mathbb{N}}
\newcommand\tensor{\otimes}
\newcommand{\dsum}{\oplus}
\newcommand\ie{{\it i.e.,\,}}
\newcommand\C{\mathbb{C}}
\newcommand\Sym{\mathfrak{S}}
\newtheorem{theorem}{Theorem}[section]
\newtheorem{corollary}[theorem]{Corollary} 
\newtheorem{lemma}[theorem]{Lemma}
\newtheorem{remark}[theorem]{Remark}
\newtheorem{proposition}[theorem]{Proposition}
\newcommand\blfootnote[1]{%
  \begingroup
  \renewcommand\thefootnote{}\footnote{#1}%
  \addtocounter{footnote}{-1}%
  \endgroup
}
\begin{document}

\title[Graded Invariants for mixed tensor spaces]{Graded Picture Invariants and polynomial invariants for mixed tensor superspaces}
\author{Santosha Pattanayak, Preena Samuel}
\date{\today}
\blfootnote{\\ \noindent
MSC subject classification 2020: 17B10, 17B35, 17B65, 14M30.\\
Keywords: Picture invariants, Invariant theory, Lie supergroups, Lie superalgebras.}

\maketitle

\begin{abstract}
 In this paper we consider the mixed tensor space of a $\mathbb Z_2$-graded vector space. We obtain a spanning set of invariants of the associated symmetric algebra under the action of the general linear supergroup as well as the queer supergroup over the Grassmann algebra. As a consequence, we give a generating set of polynomial invariants for the simultaneous adjoint action of the general linear supergroup  on several copies of its Lie superalgebra. We show that in this special case, these turn out to be  the supertrace monomials which is
analogous to the result in \cite{P}. A queer supergroup analogue of these results is also obtained.   
\end{abstract}

\section{Introduction}
    The theory of Lie superalgebras has aroused much interest both in mathematics and physics. Lie superalgebras and their representations play a fundamental role in theoretical physics since they are used to describe supersymmetry in a mathematical framework. A comprehensive description of the mathematical theory of Lie superalgebras is given in \cite{Kac}, containing the complete classification of all finite-dimensional simple Lie superalgebras over an algebraically closed field of characteristic zero. Since then, these superalgebras have found applications in various areas including quantum mechanics, nuclear physics, particle physics, and string theory. In the last few years, the theory of Lie superalgebras has experienced a remarkable evolution with many results obtained on their representation theory and classification, most of them extending well-known facts from the theory of Lie algebras.

    In \cite{P}, Procesi studied tuples $(A_1, \cdots ,A_k)$ of endomorphisms of a finite-dimensional vector space up to simultaneous conjugation by studying the corresponding ring of invariants. He showed that for an algebraically closed field $F$ of characteristic zero the algebra of invariants $F[(A_i)_{jl}]^{GL_r(F)}$ can be generated by traces of monomials in $A_1, \cdots A_k$. 
The main tool used in the work of Procesi, in order to describe the invariants, is the Schur–Weyl duality. This tool was used in a similar way also to study more complicated algebraic structures than just a vector space equipped with endomorphisms. In \cite{dks}, Datt, Kodiyalam and Sunder applied this machinery to the study of finite-dimensional complex semisimple Hopf algebras. They were able to obtain a complete set of invariants for separating isomorphism classes of complex semi-simple Hopf algebras of a given dimension. This  they  accomplished by giving an explicit spanning set for the invariant ring of the mixed tensor space. These are called  ``picture invariants". These invariants are also obtained by using techniques from Geometric Invariant Theory in \cite{M}. In \cite{KR} these picture invariants were used to describe a finite collection of rational functions in the structure constants of a Lie algebra, which form a complete set of invariants for the isomorphism classes of complex semisimple Lie algebras of a given dimension. More recently, the invariant ring of the mixed tensor spaces was used to extend results of \cite{dks} to separate isomorphism classes of complex Hopf algebras of small dimensions \cite{PS}. The invariant ring of the mixed tensor space arises naturally while obtaining invariants of any such isomorphism classes. Here, we look at the analogue of this invariant ring in the supersetting. We define ``graded picture invariants" for the invariant ring of the mixed tensor superspace in the supersetting and show that these invariants $\Lambda$-linearly span the invariant ring. 

  The invariant theory of superalgebras has seen remarkable progress in the recent years. In \cite{Ser}, Sergeev proved fundamental theorems of invariant
theory for classical supergroups and their Lie superalgebras (see \cite{Ser1} and \cite{Ser2} for a detailed exposition). A super analogue of the Schur–Weyl duality between
the general linear superalgebra and the symmetric group was established independently
in \cite{BR} and \cite{Ser}. In \cite{SM}, Shader and Moon gave a super Schur–Weyl duality for the general linear superalgebra acting on the mixed tensor space.

The general linear supergroup of a super vector space $V$ over $\C$ is defined as a super group scheme. We, however, take the popular view of considering this group scheme as an actual group over the Grassmann algebra $\Lambda$ by taking the $\Lambda$-valued points of the general linear group scheme. We denote this group as $\gl(U)$ where $U:=V\tensor_\C\Lambda$. The queer supergroup $Q(U)$ is defined as a subgroup of $\gl(U)$. By working with $\gl(U)$ and $Q(U)$ we are able to take a more elementary approach to the theory of invariants for these supergroups.  

In \cite{Berele}, \cite{Berele2}, Berele succesfully extended results of \cite{P} to the supersetting by showing that the invariants for the $\gl(U)$-action on the super ring of polynomials in the entries of $d$-tuples of matrices over $\Lambda$ is generated by supertrace monomials in the $d$ matrices.   As in the classical case, the main ingredient for obtaining the invariants comes from the super analogue of the Schur-Weyl duality. In this paper, we extend Berele's results to the action of the general linear supergroup and the queer supergroup on the mixed tensor superspace $\dsum_{i=1}^sU_{b_i}^{t_i}$ where  $t_i$, $b_i$ are in $\N\cup \{0\}$ for all $i=1,\ldots s$. The case of invariants of $d$ copies of matrices, as in \cite{Berele2} may be seen as a special case of the above by taking $t_i=1=b_i$ for all $i=1\ldots,s$. We show in Theorem~\ref{t:picture} that the ring of invariants of the supersymmetric algebra $S(\dsum_{i=1}^sU_{b_i}^{t_i})^*$, is generated by certain special invariants which are analogues of the ``picture invariants" in \cite{dks}. We then show in Theorem~\ref{t:trace} that in the special case of $t_i=1=b_i$ for all $i=1\ldots,s$, this agrees with the results of \cite{Berele2}.

  In the supersetting, we use the notion of $\Lambda$-valued polynomials over $U$, as introduced in \cite{LZ}. This is done via the supersymmetric algebra of the dual space $U^*.$ We consider the $\Lambda$-module of maps from the degree-$0$ component of $U$ to $\Lambda$, denoted as  $\mathcal{F}(U_0,\Lambda)$. Then using an analogue of the restitution map from the $r$-fold tensor space of $U^*$ to  $\mathcal{F}(U_0, \Lambda)$, as defined in \cite{LZ}, we call the image of this restitution map to be the space of homogeneous polynomials of degree $r$. The polynomial ring on $U$ is then taken to be the direct sum of these spaces. It is also shown in {\it loc.\,cit} that this polynomial ring is isomorphic to the symmetric algebra of $U^*$, under the restitution map. We view this description of polynomials over $U$ as a suitable alternative to Berele's notion of a polynomial over matrices with scalars in $\Lambda$, as defined in \cite{Berele2}, since we are interested in mixed tensor superspaces which do not have any natural identification with matrices. In this paper, we use the above notion of $\Lambda$-valued polynomials on $\dsum_{i=1}^sU_{b_i}^{t_i}$ and obtain a generating set for the polynomial ring of invariants for a mixed tensor superspace. We obtain `graded picture invariants' as invariant polynomial functions on the mixed tensor space. We then show that in the special case when the mixed tensor space corresponds to several copies of the endomorphism space of $U$, the graded picture invariants are just supertrace monomials as given in \cite{Berele2}. This may be regarded as the superanalogue of Procesi's result.

  The queer Lie superalgebras form a family of ``strange" Lie superalgebras
since they do not possess non-degenerate invariant even bilinear forms. The representation theory and invariant theory of the queer superalgebras $\mathfrak q(n)$ have been intensively studied in the last two decades. The analogous statement of Schur-Weyl duality for the queer superalgebra was given by Sergeev \cite{Ser3} during the study of their polynomial representations. The duality is often called the Schur–Weyl–Sergeev duality which gives an isomorphism between a super-extension of the symmetric group $Ser_k$, called the Sergeev algebra and $End_{\mathfrak q(n)}(V^{\otimes k})$ for $n \geq k$. This duality also lifts to the queer supergroup $Q(U)$ and the Sergeev superalgebra (see \cite{Berele2}). An extension of this duality to mixed tensor spaces can be found in \cite{JK}. Using the Schur-Weyl-Sergeev duality, we show in Theorem~\ref{t:queer} that the polynomial ring of invariants of the mixed tensor space $\dsum_{i=1}^sU_{b_i}^{t_i}$ under the action of the queer supergroup is generated by a larger collection of ``graded picture invariants". As a corollary, we show that the polynomial invariants in the case of $t_i=1=b_i$ for all $i=1,\ldots ,s$ is generated by queer trace monomials, which is also proved in \cite{Berele2}. 

We now give an outline of the paper. In section 2 we review preliminaries of super vector spaces and Lie superalgebras and we also recall the super analogue of the classical Schur-Weyl duality. In section 3 we include a proof of the first fundamental theorem of invariant theory for tensors. In section 4 we introduce the notion of graded picture invariants and prove that these graded picture invariants span the invariants of the supersymmetric algebra. Using this result we give a spanning set for the polynomial invariants of the mixed tensor space and thereby show that the supertrace monomials span the  polynomial invariants for the simultaneous adjoint action of the general linear supergroup on several copies of its Lie superalgebra. In section 5 we recall the definition of Queer supergroup and state the Schur-Weyl-Sergeev duality for it. In section 6 we extend the results of section 4 to the Queer supergroup.

\section{Preliminaries}
\subsection{Grassmann algebra}
Given a vector space $M$ we denote by $T(M)$ the tensor algebra on $M$. Let $I$ be the $2$-sided ideal of $T(M)$ generated by the elements $v \otimes w+w \otimes v$, $v,w \in M$. Then the exterior algebra over $M$ is the $\mathbb Z_+$-graded algebra defined by $\Lambda(M)=T(M)/I$. As a $\mathbb C$-vector space, $\Lambda(M)$ has dimension $2^n$, where $n=dim(M)$. Note that $\Lambda(M)$ is $\mathbb Z_2$-graded in which $\Lambda_0$ (resp $\Lambda_1$) is the direct sum of the homogeneous subspaces of even (resp. odd) degrees. This superalgebra is called the Grassmann algebra of degree $n$. Choosing a basis $v_1,v_2, \cdots, v_n$ of $M$, the set $\{v_1^{\alpha_1}v_2^{\alpha_2},\cdots ,v_n^{\alpha_n}: \alpha_i=0,1\}$ forms a basis of $\Lambda(M)$. Since $\Lambda(M)$ depends only on $n=dim(M)$ up to isomorphism, we write $\Lambda(n)$ for $\Lambda(M)$. 

For any flag $0 \subset M_1 \subset M_2 \subset \cdots \subset M_{n-1} \subset M_n=M$, where $dim(M_i)=i$, we have $\mathbb C \subset \Lambda(1) \subset \Lambda(2) \subset \cdots \subset \Lambda(n-1) \subset \Lambda(n)$. Then the infinite dimensional Grassmann algebra is defined by the direct limit of this direct system, $\Lambda=\varinjlim \Lambda(n)$. Apart from the $\mathbb Z_2$-grading, $\Lambda$ has a $\mathbb Z_+$-grading $\Lambda=\oplus_{i=0}^\infty\Lambda_i$, where $\Lambda_0=\mathbb C$. The Grassmann algebra $\Lambda$ is super commutative means that if $x,y \in \Lambda$ are homogeneous, then $xy=(-1)^{|x||y|}yx$, where $|x|=i$ if $x \in \Lambda_i$. This implies that any right or left module over $\Lambda$ is automatically a bimodule.

Throughout this paper the modules that we consider are over the infinite dimensional Grassmann algebra $\Lambda$. The projection map $\Lambda \rightarrow \mathbb C$ will be used to specialize the scalars to $\mathbb C$.

\subsection{Super vector spaces} A $\mathbb Z_2$-graded vector space $V$ over $\mathbb C$ is a direct sum $V_0 \oplus V_1$ of two subspaces, the even subspace $V_0$ and the odd subspace $V_1$. If $\mathrm{dim}(V_0)=k$ and $\mathrm{dim}(V_1)=l$, we write $\mathrm{sdim}(V)=(k|l)$, and call it the {\bf superdimension} of $V$. Define the parity $|v|$ of any homogeneous element $v \in V_i$ by $|v|=i$. 

For any $\mathbb Z_2$-graded vector space $V$ over $\mathbb C$, we let $U:=V \otimes \Lambda$ be the corresponding free $\Lambda$-module. We have the natural embedding $V \hookrightarrow U, v \mapsto v \otimes 1$. If $W$ is another $\mathbb Z_2$-graded $\mathbb C$-vector space and $U':=W \otimes \Lambda$, then ${\rm Hom}_{\Lambda}(U, U') \cong {\rm Hom}_{\mathbb C}(V,W) \otimes \Lambda$. 

Let $M_{k,l}(\Lambda)$ be the set of all $(k+l) \times (k+l)$ matrices over $\Lambda$. We have $M_{k,l}(\Lambda) \cong M_{k,l}(\mathbb C) \otimes \Lambda$ as $\mathbb Z_2$-graded vector spaces. We write a matrix $T$ in block form as  \begin{equation}
    T=\begin{pmatrix}
A & B \\
C & D
\end{pmatrix}
\end{equation} If  $T$ is even (resp. odd), then the entries of $A$ and $D$ belong to $\Lambda_0$ (resp. $\Lambda_1$), and the entries of $B$ and $C$ belong to $\Lambda_1$ (resp. $\Lambda_0$) . If $sdim(V)=(k|l)$ and $U:=V \otimes \Lambda$, we have a $\mathbb Z_2$-graded vector space isomorphism $\Endo_{\Lambda}(U) \cong M_{k,l}(\Lambda).$ 

We define the supertrace function $str: M_{k,l}(\Lambda) \rightarrow \Lambda$ given by $str(\begin{pmatrix}
A & B \\
C & D
\end{pmatrix})=tr(A)-tr(D)$, where for a matrix $X$, $tr(X)$ denotes the sum of the diagonal entries of $X$. We note that the supertrace function is $\Lambda_0$-linear and satisfies $str(ST)=str(TS)$ for $S,T \in M_{k,l}(\Lambda)$.

The general linear supergroup ${\rm GL}(U)$ is the set
${\rm GL}(U) = \{g \in \Endo_{\Lambda} (U )_0 : g \,\, is \,\,  invertible \}$.


 \subsection{}\label{action} Let $V$ be a super vector space over $\mathbb C$ and let $V^{\otimes k}=V \otimes V \otimes \cdots \otimes V$ be the $k$-fold tensor product of $V$. Then $V^{\otimes k}$ is a $\mathfrak{gl}(V)$-module by defining 
\begin{multline}\label{e:action} x.(v_1 \otimes v_2 \otimes \cdots \otimes v_k)= x.v_1 \otimes v_2 \otimes \cdots \otimes v_k +
  (-1)^{|x||v_1|}v_1 \otimes x.v_2 \otimes \cdots \otimes v_k+ \\ \cdots +(-1)^{|x|(|v_1|+|v_2|+\cdots +|v_{k-1}|)}v_1 \otimes v_2 \otimes \cdots \otimes x.v_k,
  \end{multline}
   where $x \in \mathfrak{gl}(V)$ and $v_j \in V$ are $\mathbb Z_2$-homogeneous elements of degrees $|x|$ and $|v_j|$ respectively.

   When $U:=V \otimes_{\mathbb C} \Lambda$, the general linear superalgebra $\mathfrak{gl}(U)$ acts on $U^{\otimes k}$ by the ordinary derivation, i.e., $$x.(u_1 \otimes u_2 \otimes \cdots \otimes u_k)= \sum_{i=1}^k u_1 \otimes u_2 \otimes \cdots \otimes x.u_i \otimes \cdots \otimes u_k.$$

 On the other hand the symmetric group $S_k$ acts on $V^{\otimes k}$ by
 \begin{equation} (i,i+1). v_1 \otimes v_2 \otimes \cdots \otimes v_i \otimes v_{i+1}\otimes  \cdots \otimes v_k=(-1)^{|v_i||v_{i+1}|}v_1 \otimes v_2 \otimes \cdots \otimes v_{i+1} \otimes v_i\otimes  \cdots \otimes v_k,
 \end{equation} 
 where $(i, j)$ denotes a transposition in  $S_k$ and $v_i, v_{i+1}$ are $\mathbb Z_2$-homogeneous. More generally the action of $S_k$ on $V^{\otimes k}$ is defined as follows: For $\sigma \in S_k$ and $\underline{v}=v_1 \otimes v_2 \otimes \cdots \otimes v_k$,
 $$\sigma .\underline{v}=\gamma(\underline{v}, \sigma^{-1})(v_{\sigma^{-1}(1)}\otimes v_{\sigma^{-1}(2)} \otimes \cdots \otimes v_{\sigma^{-1}(k)}),$$ where $\gamma(\underline{v}, \sigma)=\prod_{(i,j) \in Inv(\sigma)}(-1)^{|v_i||v_j|}$, with $Inv(\sigma)=\{(i,j): i < j \,\, \text{and} \,\, \sigma(i) > \sigma(j)\}$. 

 It is easy to see that $\gamma(\underline{v},\sigma\tau)=\gamma(\sigma^{-1}\underline{v},\tau)\gamma(\underline{v},\sigma)$ for two permutations $\sigma$ and $\tau$.

 We denote the representations of $S_k$ and $\mathfrak{gl}(V)$ on $V^{\otimes k}$ by $\eta_k$ and $\rho_k$ respectively.

 \subsection{Isomorphisms}\label{iso} In this subsection we list out all the isomorphisms we require in this paper. Let $V$ be a super vector space over $\mathbb C$.

 1. $V \otimes V^* \cong \Endo(V)$ defined by $(v,\phi) \mapsto T_{(v,\phi)}$ where $T_{(v,\phi)}(w)=v\phi(w)$.

 This implies that $\Endo(V)^{\otimes k} \cong (V \otimes V^*)^{\otimes k}$ for $k \in \mathbb Z_{\geq 0}$. 

 2. For two super vector spaces $V$ and $W$ over $\mathbb C$, we have an isomorphism $V \otimes W \rightarrow W \otimes V$ defined by $v\otimes w \mapsto (-1)^{|v||w|}w \otimes v$, where $v$ and $w$ are homogeneous elements of $V$ and $W$ respectively. 

 3. We have an evaluation map $(V^*)^{\otimes k} \times V^{\otimes k} \rightarrow \mathbb C$ defined by $$(f_1^*\otimes f_2^*\otimes \cdots \otimes f_k^*, v_1\otimes v_2\otimes \cdots \otimes v_k) \mapsto (-1)^{p(\underline{f}^*,\underline{v})}f_1^*(v_1)\cdots f_k^*(v_k),$$ where $\underline{f}^*=f_1^*\otimes f_2^*\otimes \cdots \otimes f_k^*$,  $\underline{v}=v_1\otimes v_2\otimes \cdots \otimes v_k$ for homogeneous elements $f_i^*$ and $v_i$, and $p(\underline{f}^*,\underline{v})$ is defined as follows. For homogeneous elements $f_i^*$ and $v_i$, let $d_j=|v_j|(|f^*_{j+1}|+\cdots +|f^*_k|)$ with $d_k=0$. Then $p(\underline{f}^*,\underline{v})=\sum_{j=1}^k d_j$ and extend this definition  by linearity.  

 4. The above evaluation map is a non-degenerate bilinear form and hence we get an isomorphism between $(V^*)^{\otimes k}$ and $(V^{\otimes k})^*$. 
 
 5. Let $\tau$ be the permutation which takes $(1,2,\cdots, k,k+1, \cdots ,2k)$ to $(\tau(1), \tau(2), \cdots ,\tau(k),\tau(k+1), \cdots ,\tau(2k))=(1,3,5, \cdots ,2k-1, 2,4,6, \cdots ,2k)$. The symmetric group $S_{2k}$ acts on $(V\dsum V^*)^{\tensor k}$. Under this action, the permutation $\tau$ takes $V^{\otimes k} \otimes (V^*)^{\otimes k}$ to $(V \otimes V^*)^{\otimes k}=(V \otimes V^*) \otimes (V \otimes V^*) \otimes \cdots \otimes (V \otimes V^*)$. Thus we get an $\Endo(V)$-module isomorphism between $V^{\otimes k} \otimes (V^*)^{\otimes k}$ and $(V \otimes V^*)^{\otimes k}$. We then have 
\[ V^{\otimes k} \otimes (V^*)^{\otimes k}\cong (V \otimes V^*)^{\otimes k} \cong\Endo(V)^{\otimes k} .\]

Let $\underline{v}=v_1 \otimes v_2 \otimes \cdots \otimes v_k$ and $\underline{f}^*=f^*_1\otimes f^*_2\otimes \cdots \otimes f^*_k$. The map in the reverse direction is defined by $\underline{v} \otimes \underline{f}^* \mapsto (-1)^{p(\underline{v},\underline{f}^*)}(v_1 \otimes f_1^*) \otimes (v_2 \otimes f_2^*) \otimes \cdots \otimes (v_k \otimes f_k^*)$. Note that the sign $(-1)^{p(\underline{v},\underline{f}^*)}$ is exactly $\gamma(\underline{v}\tensor\underline{f}^*, \tau^{-1})$.

6. Let $W=W_1\oplus \cdots \oplus W_k$, where $W_i$ are super vector spaces over $\mathbb C$. Then the group $S_{2k}$ acts on $(W\oplus W^*)^{\tensor 2k}$. The permutation $\tau$ defined above then induces an isomorphism between $W_1^*\tensor \cdots \tensor W_k^* \tensor W_1\tensor \cdots \tensor W_k$ and $W_1^*\tensor W_1\tensor \cdots \tensor W_k^* \tensor W_k$. This isomorphism followed by the evaluation map to $\C$ gives a non-degenerate pairing between $W_1^*\tensor \cdots \tensor W_k^* $ and $(W_1\tensor \cdots \tensor W_k)^*$. As the $S_{2k}$ action commutes with the $\Endo(W)$-action, so the isomorphism $W_1^*\tensor \cdots \tensor W_k^* \cong (W_1\tensor \cdots \tensor W_k)^*$ is an $\Endo(W)$ isomorphism. 

It may be noted here that under the above isomorphism, the dual basis element $T_{w_1\tensor \cdots\tensor w_n}\in (W_1\tensor \cdots \tensor W_k)^* $ corresponds to $(-1)^{p(\underline{w},\underline{w})}T_{w_1}\tensor \cdots\tensor T_{w_k}\in W_1^*\tensor \cdots \tensor W_k^* $ where $\underline{w}=w_1\tensor \ldots\tensor w_k$.

7. Let $\nu\in S_{2k}$ be the permutation $(1\, 2)(3\, 4)\cdots (2k-1\, 2k)$ and $\text{ev}^{\tensor k}:(V^*\tensor V)^{\tensor k} \to \C$ denote the evaluation map $(f_1^* \otimes v_1) \otimes (f_2^* \otimes v_2) \otimes \cdots \otimes (f_k^* \otimes v_k)\mapsto f_1^*(v_1)\cdots f_k^*(v_k)$. We have a non-degenerate bilinear form $\Endo(V^{\tensor k})\tensor V^{\tensor k}\tensor (V^*)^{\tensor k}\to \C$ given by $\langle A,\underline{v}\tensor \underline{f}^*\rangle:= \text{ev}^{\tensor k}[(\nu.\tau).(A(\underline{v})\tensor \underline{f}^*)]$. This gives an $\Endo(V)$ isomorphism \[\Theta: \Endo(V^{\tensor k})\cong (V^{\tensor k}\tensor (V^*)^{\tensor k})^*\]

\begin{remark} All the isomorphisms given above also hold true with $V$ replaced by $U:=V \otimes_{\mathbb C} \Lambda$.
\end{remark}

\subsection{Supertrace} For $U=V \otimes_{\mathbb C} \Lambda$, we have $U^*={\rm Hom}_{\Lambda}(U,\Lambda)$ and $U \otimes U^* \cong \Endo_{\Lambda}(U)$ as graded $\Lambda$-algebras where the multiplication on $U \otimes U^*$ is defined by $(v \otimes \phi)(w \otimes \psi)=v \otimes \phi(w)\psi$. 

We define the supertrace map $str:\Endo_{\Lambda}(U) \rightarrow \Lambda$ by $$str(v \otimes \phi)=(-1)^{|v||\phi|}\phi(v)$$ and with the above identification this definition agrees with the definition given on $\Endo_{\Lambda}(U)$. It satisfies $str(AB)=(-1)^{|A||B|}str(BA)$, for $A,B \in \Endo_{\Lambda}(U)$. When restricted to $\Endo_{\Lambda}(U)_0$, the $str$ map behaves like the ordinary trace map in the sense that $str(AB)=str(BA)$ for $A,B \in \Endo_{\Lambda}(U)_0$ and $str(CDC^{-1})=str(D)$ for $D \in \Endo_{\Lambda}(U)$ and $C \in GL(U)$.

 \subsection{Schur-Weyl duality}
For a super vector space $V$ over $\mathbb C$, the actions of $\mathfrak{gl}(V)$ on $V^{\otimes k}$ and the right action of $S_k$ on $V^{\otimes k}$  defined in 
  \S \ref{action} commute with each other and the super analogue of Schur-Weyl duality says that the span of the images of $S_k$ and $\mathfrak{gl}(V)$ in $\mathrm{End}(V^{\otimes k})$ are centralizers of each other (see \cite{BR}). In particular we have a surjective  algebra homomorphism: \[\eta_k: \mathbb C[S_k] \rightarrow \mathrm{End}_{\mathfrak{gl}(V)}(V^{\otimes k}),\] where $\mathrm{End}_{\mathfrak{gl}(V)}(V^{\otimes k})$ is the space of $\mathfrak{gl}(V)$-module homomorphisms of $V^{\otimes k}$. This is equivalent to taking the right action of $S_k$ on $V^{\otimes k}$ and the induced homomorphism \[\Psi_k: \mathbb C[S_k] \rightarrow \mathrm{End}_{\mathfrak{gl}(V)}(V^{\otimes k})^{op}.\]

 \subsection{Schur-Weyl duality over $\Lambda$} Let $V$ be a super vector space over $\mathbb C$ and let $U=V \otimes_{\mathbb C} \Lambda$. Let $U^{\otimes r}=U \otimes \cdots \otimes U$ ($r$-times) be the $r$-fold tensor product of $U$. The general linear supergroup ${\rm GL}(U)$ acts on $U^{\otimes r}$ by $$g.(w_1 \otimes w_2 \otimes \cdots \otimes w_r)=g.w_1 \otimes g.w_2 \otimes \cdots \otimes g.w_r$$ for $g \in {\rm GL}(U)$. The dual superspace $U^*$ of $U$ has a natural $GL(U)$-module structure given by $g.f(v)=f(g^{-1}.v)$, for all $v \in U$. 

On the other hand, the symmetric group $S_r$ acts on $U^{\otimes r}$ by
 \begin{equation} (i,i+1). v_1 \otimes v_2 \otimes \cdots \otimes v_i \otimes v_{i+1}\otimes  \cdots \otimes v_r=(-1)^{|v_i||v_{i+1}|}v_1 \otimes v_2 \otimes \cdots \otimes v_{i+1} \otimes v_i\otimes  \cdots \otimes v_r,
 \end{equation} 
 where $(i, j)$ denotes a transposition in  $S_r$ and $v_i, v_{i+1}$ are $\mathbb Z_2$-homogeneous. We denote the representations of $S_r$ and ${\rm GL}(U)$ on $U^{\otimes r}$ by $\eta_r$ and $\rho_r$ respectively. 
 
 The group ring $\Lambda S_r = \mathbb C S_r \otimes \Lambda$ is an associative superalgebra, with $\mathbb C S_r$
regarded as purely even. By extending the action of $S_r$ on $U^{\otimes r}$ $\Lambda$-linearly we get an action of the super algebra $\Lambda S_r$ on $U^{\otimes r}$. It is easy to see that the actions of $\Lambda S_r$ and $GL(U)$ on $U^{\otimes r}$ commute with each other.

We then have the following super analogue of Schur-Weyl duality and for a proof see \cite{LZ}.

\begin{theorem}
Let $End_{(GL(U)}(U^{\otimes r})=\{\phi \in End_{\Lambda}(U^{\otimes r}): g\phi=\phi g, \forall g \in GL(U)\}$. Then $End_{GL(U)}(U)^{\otimes r}=\eta (\Lambda S_r)$.
\end{theorem}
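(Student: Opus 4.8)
The plan is to bootstrap the statement from the classical super Schur–Weyl duality over $\mathbb{C}$ (stated in the previous subsection) by a base-change argument, exploiting the fact that $U = V \otimes_{\mathbb{C}} \Lambda$ and $U^{\otimes r} = V^{\otimes r} \otimes_{\mathbb{C}} \Lambda$ as $\Lambda$-modules. First I would record the inclusion $\eta(\Lambda S_r) \subseteq \End_{\gl(U)}(U^{\otimes r})$, which is immediate since the $\Lambda S_r$-action and the $\gl(U)$-action (hence the $\mathrm{GL}(U)$-action) commute, as already noted in the excerpt; so the content is the reverse inclusion, i.e. surjectivity of $\eta$ onto the commutant. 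The key reduction is that $\End_{\Lambda}(U^{\otimes r}) \cong \End_{\mathbb{C}}(V^{\otimes r}) \otimes_{\mathbb{C}} \Lambda$ as $\mathbb{Z}_2$-graded $\Lambda$-modules (using the Hom–tensor identity from \S\ref{iso} and the remark that follows it), and under this identification the $\gl(U)$-invariants, or equivalently the $\mathrm{GL}(U)$-invariants, should be shown to be precisely $\End_{\gl(V)}(V^{\otimes r}) \otimes_{\mathbb{C}} \Lambda$.

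The heart of the argument is therefore the claim that $\End_{\gl(U)}(U^{\otimes r}) = \End_{\gl(V)}(V^{\otimes r}) \otimes_{\mathbb{C}} \Lambda$. One direction is clear: any $\gl(V)$-equivariant endomorphism of $V^{\otimes r}$ extends $\Lambda$-linearly to a $\gl(U)$-equivariant endomorphism of $U^{\otimes r}$, since $\gl(U) = \gl(V) \otimes_{\mathbb{C}} \Lambda$ acts by the natural derivation and $\gl(V)$-equivariance is preserved under base change. For the converse, I would take $\phi \in \End_{\Lambda}(U^{\otimes r})$ and write it as $\phi = \sum_j \phi_j \otimes \lambda_j$ with $\phi_j \in \End_{\mathbb{C}}(V^{\otimes r})$ and $\lambda_j \in \Lambda$ forming a $\mathbb{C}$-basis of a finite-dimensional subspace of $\Lambda$; imposing commutation with $x \otimes 1$ for all $x \in \gl(V)_0$ and $x \in \gl(V)_1$ and comparing coefficients against the $\lambda_j$ forces each $\phi_j$ (up to the Koszul signs introduced by odd $\lambda_j$) to be $\gl(V)$-equivariant. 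Here the one subtlety to handle carefully is the sign bookkeeping: commuting $\phi$ past an odd element $x$ picks up $(-1)^{|\phi||x|}$, and when $\phi_j$ is paired with an odd $\lambda_j$ the parity of $\phi_j \otimes \lambda_j$ differs from that of $\phi_j$; one checks that this precisely matches the super-commutator defining $\End_{\gl(V)}(V^{\otimes r})$ inside $\End_{\mathbb{C}}(V^{\otimes r})$, so no information is lost. Finally, since $\mathbb{C} S_r \to \End_{\gl(V)}(V^{\otimes r})$ is surjective by the classical super Schur–Weyl duality, tensoring with $\Lambda$ gives surjectivity of $\Lambda S_r = \mathbb{C} S_r \otimes_{\mathbb{C}} \Lambda \to \End_{\gl(V)}(V^{\otimes r}) \otimes_{\mathbb{C}} \Lambda = \End_{\gl(U)}(U^{\otimes r})$, which is the assertion $\End_{\mathrm{GL}(U)}(U^{\otimes r}) = \eta(\Lambda S_r)$.

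The step I expect to be the main obstacle is the identification $\End_{\gl(U)}(U^{\otimes r}) = \End_{\gl(V)}(V^{\otimes r}) \otimes_{\mathbb{C}} \Lambda$, and more precisely the passage between the Lie-superalgebra invariants (for $\gl(U)$) and the group invariants (for $\mathrm{GL}(U)$) — one must verify that commuting with all of $\mathrm{GL}(U)$ is equivalent to commuting with $\gl(U)$ here, which over $\Lambda$ is not entirely formal because $\mathrm{GL}(U)$ consists only of the even invertible elements. I would settle this either by noting that $\mathrm{GL}(U)$ contains enough elements of the form $1 + tx$ (for $t$ an odd or even parameter and $x \in \gl(U)$) to detect the full super Lie algebra action, or by directly arguing at the level of $\gl(U)$ throughout and invoking the excerpt's standing identification of the two invariant rings; the sign conventions from \S\ref{action} and \S\ref{iso} must be threaded through consistently, but once the base-change picture is set up this is routine.
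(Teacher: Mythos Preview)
The paper does not supply its own proof of this theorem; it simply refers the reader to \cite{LZ}. Your base-change strategy --- identifying $\End_\Lambda(U^{\otimes r})$ with $\End_{\mathbb{C}}(V^{\otimes r}) \otimes_{\mathbb{C}} \Lambda$, showing that the $\gl(U)$-invariants therein coincide with $\End_{\mathfrak{gl}(V)}(V^{\otimes r}) \otimes_{\mathbb{C}} \Lambda$, and then invoking the super Schur--Weyl duality over $\mathbb{C}$ --- is essentially the argument given in that reference. The point you correctly isolate as the main obstacle, namely passing from ordinary commutation with the even group $\gl(U)$ to super-commutation with the odd part of $\mathfrak{gl}(V)$, is handled there by exactly the device you propose: for odd $x \in \mathfrak{gl}(V)$ and an odd Grassmann parameter $\xi \in \Lambda$, the element $1 + x\xi$ is even and unipotent in $\End_\Lambda(U)$, hence lies in $\gl(U)$, and the infinite supply of independent odd $\xi$'s in $\Lambda$ lets one strip off $\xi$ by a coefficient comparison to recover $[\rho_r(x), \phi_j] = 0$ for each $\mathbb{C}$-component $\phi_j$ of $\phi$.
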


\section{Tensor FFT for $\mathfrak{gl}(V)$} 

\subsection{Tensor FFT over $\mathbb C$} Let $V=V_0 \oplus V_1$ be a super vector space over $\mathbb C$. The tensor version of the first fundamental theorem of invariant theory for $\mathfrak{gl}(V)$ describes a spanning set for $(V^{\otimes k} \otimes (V^*)^{\otimes k})^{\mathfrak{gl}(V)}$ which can be derived from the super analogue of the Schur-Weyl duality described above. Here we provide a proof for the convenience of the reader. 
 
 Let $e_1, \cdots ,e_{m+n}$ be a basis for $V$ with the first $m$ vectors are of degree $0$ and the rest are of degree $1$. Let $e_1^*, \cdots ,e_{m+n}^*$ be the dual basis for $V^*$. 
 
We set $S=S_0 \cup S_1$, where $S_0 = \{1, 2, \cdots, m\}$ and $S_1 = \{m + 1,m + 2, \cdots,m + n\}$. We define parity for $I$ as $p(i) = 0$ if $i \in S_0$ and $p(i) = 1$ if $i \in S_1$.  We also set $|E_{ij}|=p(i)+p(j)$. 

 For $\sigma \in S_k$, we define 
 \[\theta_{\sigma}=  \sum_I (-1)^{p(I,I)} \gamma(I,\sigma^{-1})(e_{i_{\sigma^{-1}(1)}} \otimes \cdots \otimes e_{i_{\sigma^{-1}(k)}}) \otimes (e_{i_1}^* \otimes \cdots \otimes e_{i_k}^*),\]
 where the sum is over all the $k$-element subsets $I=\{i_1, i_2, \cdots ,i_k\}$ of $S$ and $\gamma(I,\sigma)=\gamma(e_I,\sigma)$ and $p(I,I)=p(e_I, e_I)$ with $e_I=e_{i_1} \otimes \cdots \otimes e_{i_k}$. 
 
 In what follows we use the notation $p(I,J)=p(e_I,e_J)$, where $e_I=e_{i_1} \otimes \cdots \otimes e_{i_k}$ and $e_J=e_{j_1} \otimes \cdots \otimes e_{j_k}$ for subsets $I=\{i_1, i_2, \cdots ,i_k\}$ and $J=\{j_1, j_2, \cdots ,j_k\}$. 
 
 Note that the elements $(\theta_{\sigma})_{\sigma \in S_k}$ are in $(V^{\otimes k} \otimes (V^*)^{\otimes k})^{\mathfrak{gl}(V)}$. 

\begin{theorem}\label{fft-gln} (Tensor FFT for $\mathfrak{gl}(V)$). The space $(V^{\otimes k} \otimes (V^*)^{\otimes k})^{\mathfrak{gl}(V)}$ is spanned by the set $\{\theta_{\sigma}:\sigma \in S_k\}$.
\end{theorem}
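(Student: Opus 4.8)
The plan is to derive this from the super Schur--Weyl duality via the chain of isomorphisms listed in Subsection~\ref{iso}. First I would identify
\[
V^{\otimes k}\otimes (V^*)^{\otimes k}\;\cong\;(V\otimes V^*)^{\otimes k}\;\cong\;\End(V)^{\otimes k}\;\cong\;\End(V^{\otimes k})
\]
as $\mathfrak{gl}(V)$-modules, using isomorphism~5 and the evaluation-type isomorphism~7 ($\Theta$). Under these identifications the space of $\mathfrak{gl}(V)$-invariants on the left corresponds to $\End_{\mathfrak{gl}(V)}(V^{\otimes k})$: indeed, for the (super) conjugation action of $\mathfrak{gl}(V)$ on $\End(V^{\otimes k})$, the invariants (i.e.\ elements killed by $x\mapsto x\circ A - (-1)^{|x||A|}A\circ x$ for all $x$) are exactly the $\mathfrak{gl}(V)$-module endomorphisms of $V^{\otimes k}$. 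This uses the fact that the diagonal action of $\mathfrak{gl}(V)$ on $V^{\otimes k}$ is by derivations, so the induced action on $\End(V^{\otimes k})$ is the commutator action.

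Next I would invoke the super Schur--Weyl duality (the surjection $\eta_k:\mathbb C[S_k]\to\End_{\mathfrak{gl}(V)}(V^{\otimes k})$ recalled just above the statement). This tells us $\End_{\mathfrak{gl}(V)}(V^{\otimes k})$ is spanned by the operators $\eta_k(\sigma)$, $\sigma\in S_k$. So it remains to transport each $\eta_k(\sigma)$ back through the isomorphism chain and check that it lands on $\theta_\sigma$ (up to, say, a reindexing $\sigma\leftrightarrow\sigma^{-1}$ and a harmless sign). Concretely, $\eta_k(\sigma)\in\End(V^{\otimes k})$ can be written in the basis as $\sum_I \gamma(e_I,\sigma^{-1})\,(e_{i_{\sigma^{-1}(1)}}\otimes\cdots\otimes e_{i_{\sigma^{-1}(k)}})\otimes(e_{i_1}^*\otimes\cdots\otimes e_{i_k}^*)$ once one unwinds the $S_k$-action formula; pushing this through $\Theta^{-1}$ and the $\tau$-shuffle isomorphism of~5 produces precisely the sign $(-1)^{p(I,I)}$ appearing in the definition of $\theta_\sigma$. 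I would also note that the $\theta_\sigma$ are manifestly $\mathfrak{gl}(V)$-invariant (stated in the excerpt), so only the spanning direction needs the duality input.

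The main obstacle I expect is bookkeeping the Koszul signs correctly: one must track the sign $\gamma(I,\sigma^{-1})$ coming from the $S_k$-action, the sign $(-1)^{p(I,I)}$ coming from dualizing / the $\tau$-permutation in isomorphism~5, and the sign in the evaluation pairing of isomorphism~3 and the form~7, and verify that they combine to give exactly the stated formula for $\theta_\sigma$ rather than some variant. This is routine but error-prone; the cleanest route is probably to fix $\sigma=\id$ first, verify $\theta_{\id}$ corresponds to the identity of $\End(V^{\otimes k})$ (equivalently to the canonical element of $V^{\otimes k}\otimes(V^{\otimes k})^*$), and then apply a general $\sigma$ using the cocycle identity $\gamma(\underline v,\sigma\tau)=\gamma(\sigma^{-1}\underline v,\tau)\gamma(\underline v,\sigma)$ recorded in \S\ref{action} to propagate the signs. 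A subtlety worth a sentence is that Schur--Weyl gives surjectivity of $\eta_k$, hence a spanning set (not a basis) — which is all that is claimed — so no dimension count or injectivity argument is needed.
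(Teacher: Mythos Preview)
Your proposal is correct and follows essentially the same approach as the paper: identify $V^{\otimes k}\otimes (V^*)^{\otimes k}$ with $\End(V^{\otimes k})$ as $\mathfrak{gl}(V)$-modules, invoke super Schur--Weyl duality to get that the invariants are spanned by the $\eta_k(\sigma)$, and then match $\theta_\sigma$ with $\eta_k(\sigma)$ through the isomorphism. The paper's execution is slightly more streamlined than what you outline---it works with a single map $\Phi:V^{\otimes k}\otimes (V^*)^{\otimes k}\to \End(V^{\otimes k})$ (built from the evaluation pairing, not via isomorphism~7/$\Theta$, which lands in the \emph{dual}) and verifies $\Phi(\theta_\sigma)=\eta_k(\sigma)$ by a direct two-line computation on basis vectors $e_J$, so the sign bookkeeping you anticipate collapses immediately without needing the $\sigma=\id$ reduction or the cocycle identity.
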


\begin{proof}
Since $V \otimes V^* \cong \mathrm{End}(V) \cong \mathfrak{gl}(V)$, we have an isomorphism \[\Phi: (V^{\otimes k} \otimes (V^*)^{\otimes k}) \rightarrow \mathrm{End}(V^{\otimes k}).\] 

Since the tensor product action of $\mathfrak{gl}(V)$ on $V \otimes V^*$ corresponds to the adjoint action on $\mathfrak{gl}(V)$, we have an isomorphism of $\mathfrak{gl}(V)$-invariants: \[\Phi: (V^{\otimes k} \otimes (V^*)^{\otimes k})^{\mathfrak{gl}(V)} \rightarrow (\mathrm{End}(V^{\otimes k}))^{\mathfrak{gl}(V)} \cong \mathrm{End}_{\mathfrak{gl}(V)}(V^{\otimes k}),\]

where $\mathrm{End}_{\mathfrak{gl}(V)}(V^{\otimes k})$ denotes the space of $\mathfrak{gl}(V)$-module endomorphisms of $V^{\otimes k}$. 

By Schur-Weyl duality, we have $\eta_k(\mathbb C[S_k]) = \mathrm{End}_{\mathfrak{gl}(V)}(V^{\otimes k})$. So we have an isomorphism (we still denote it by $\Phi$):
\[\Phi: (V^{\otimes k} \otimes (V^*)^{\otimes k})^{\mathfrak{gl}(V)} \rightarrow \eta_k(\mathbb C[S_k]).\] 

We claim that the map $\Phi$ maps $\theta_{\sigma}$ to $\eta_k(\sigma)$. Let $J=\{j_1, j_2, \cdots ,j_k\}$. Then $$\Phi(\theta_{\sigma})(e_{j_1} \otimes \cdots \otimes e_{j_k})$$ 

$$=\sum_I (-1)^{p(I,I)} \gamma(I,\sigma^{-1})(e_{i_{\sigma^{-1}(1)}} \otimes \cdots \otimes e_{i_{\sigma^{-1}(k)}}) \otimes (e_{i_1}^* \otimes \cdots \otimes e_{i_k}^*)(e_{j_1} \otimes \cdots \otimes e_{j_k})$$

$$=\sum_I (-1)^{(p(I,J)+p(I,I))}\gamma(I,\sigma^{-1}) (e_{i_1}^*(e_{j_1})\cdots e_{i_k}^*(e_{j_k}))(e_{i_{\sigma^{-1}(1)}} \otimes \cdots \otimes e_{i_{\sigma^{-1}(k)}}).$$
$$= \gamma(J, \sigma^{-1})(e_{j_{\sigma^{-1}(1)}} \otimes \cdots \otimes e_{j_{\sigma^{-1}(k)}})=\eta_k(\sigma)(e_{j_1} \otimes \cdots \otimes e_{j_k}).$$

So the set $\{\theta_{\sigma}:\sigma \in S_k\}$ spans $(V^{\otimes k} \otimes (V^*)^{\otimes k})^{\mathfrak{gl}(V)}$.

\end{proof}

\subsection{Tensor FFT over $\Lambda$}\label{s.fft.gln} Let $V$ be a super vector space over $\mathbb C$ and let $U=V \otimes_{\mathbb C} \Lambda$. Then  $U^{\otimes r} \otimes_{\Lambda} U^{*\otimes s}$ is a $GL(U)$ module. For $\alpha \in \mathbb C^*$, the element $\alpha I$ belongs to the centre of $GL(U)$ and it acts on $U^{\otimes r} \otimes U^{*\otimes s}$ by $\alpha I.x=\alpha^{r-s}x$, where $I$ is the identity matrix of size $(m+n) \times (m+n)$. Hence there are no invariants if $r \neq s$, so we only need to consider the $GL(U)$-invariants of $U^{\otimes r} \otimes_{\Lambda} U^{*\otimes r}$. Then we have a $GL(U)$-module isomorphism $$U^{\otimes r} \otimes_{\Lambda} U^{*\otimes r} \cong \Endo_{\Lambda}(U^{\otimes r}).$$
From the Schur-Weyl duality it follows that $$(U^{\otimes r} \otimes_{\Lambda} U^{*\otimes r})^{GL(U)} \cong \Endo_{GL(U)}(U^{\otimes r}) \cong \eta (\Lambda S_r).$$

\noindent We denote the image of $\sigma \in S_r$ in $(U^{\tensor r}\tensor (U^*)^{\tensor r})^{GL(U)}$ also by $\theta_{\sigma}$ as in the above theorem. 

\section{Graded invariants for mixed tensor spaces}
\newcommand{\iso}{\cong}
For a complex super vector space $V$ of super dimension $(m|n)$ and non-negative integers $t, b$, we denote by $V_b^t$ the tensor space $V^{\tensor b}\tensor (V^{*})^{\tensor t}$. Recall that the tensor space has a natural $\Z_2$-grading where the degree of a homogeneous element $v_1\tensor \cdots \tensor v_b\tensor\phi_1\tensor \cdots \tensor\phi_t$ is the sum of the degrees of the individual elements taken modulo $2$. By a {\it mixed tensor space} we mean the direct sum  of finitely many tensor spaces, \ie of the form $\dsum_{i=1}^s (V)_{b_i}^{t_i}$,  for an $s\in \N$ and non-negative integers $t_i, b_i$, $i=1,\ldots s$. The mixed tensor space inherits its $\Z_2$-grading from its graded direct summands. Let $U$ denote the free $\Lambda$-module $ V\tensor_\C\Lambda$, introduced earlier. Recall that there is a natural identification $V\hookrightarrow U$, such that $v\mapsto v\tensor 1$. As was also noted there, the supergroup $\gl(U)$ acts on $U$, hence it acts naturally on the tensor space $U^t_b$ and acts diagonally on the mixed tensor space  $\dsum_{i=1}^s (U)_{b_i}^{t_i}\cong(\dsum_{i=1}^s (V)_{b_i}^{t_i})\tensor_\C \Lambda$.

\subsection{Supersymmetric algebra of the dual of the mixed tensor space} For the $\Z_2$-graded vector space $V$ of superdimension $(m|n)$ over $\C$, let $\{e_1,\ldots,e_{n+m}\}$ be a basis with the first $m$ vectors from the even part and the next $n$ from the odd part. For notational convenience, we rename the dual basis $e_1^*,\ldots, e_{m+n}^*$ by $x_1,\ldots, x_m, y_1\ldots, y_n$ respectively. By abuse of notation, we denote by $\{e_1,\ldots ,e_{n+m}\}$  and $\{x_1,\ldots, x_m, y_1\ldots, y_n\}$, respectively the corresponding $\Lambda$-basis of $U$ and $U^*$ respectively, under the identification $V\hookrightarrow U$ and $V^*\hookrightarrow V^*\tensor_\C\Lambda\iso U^*$.  The tensor algebra on $U^*$ is $T(U^*)= \oplus_n T^n(U^*)$ where $T^n(U^*):= (U^*)^{\tensor_\Lambda n}$ for $n>0$ and $T^0(U^*):=\Lambda$. This is a $\Z$-graded superalgebra. Let $I(U^*)$ be the ideal generated by  elements of the form $u\tensor v- (-1)^{|u||v|}v\tensor u$ where $u,v\in U^*$. Note that $I(U^*)$ is both a $\Z$-graded and a $\Z_2$-graded ideal in $T(U^*)$. Hence the supersymmetric algebra $S(U^*):=T(U^*)/I(U^*)$ inherits a $\Z$-grading as well as a  $\Z_2$-grading from $T(U^*)$. We denote the image of $T^r(U^*)$ in $S(U^*)$ by $S^r(U^*)$ and the natural map from $T^r(U^*)\to S^r(U^*)$ is denoted as $\varpi_r$. In $S(U^*)$, we have the relations $x_i\tensor x_j=x_j\tensor x_i$, $x_i\tensor y_j=y_j\tensor x_i$ and $y_i\tensor y_j=-y_j\tensor y_i$.  Hence we can identify $S(V^*)$ with $(\C[x_1,\ldots, x_m]\tensor_\C \Lambda)\tensor_\Lambda(\wedge[y_1,\ldots,y_n]\tensor_\C\Lambda)$ where $\C[x_1,\ldots, x_m]$ denotes the polynomial algebra over $m$ commuting variables and $\wedge[y_1\ldots,y_n]$ denotes the exterior algebra generated by $y_1, \ldots, y_n$ satisfying the anti-commutativity relations $y_iy_j=-y_jy_i$. In other words, $S(U^*)$ has a $\Lambda$-basis given by monomials of the form $x_1^{r_1}\cdots x_m^{r_m}y_1^{s_1}\cdots y_m^{s_m}$ where $r_i\in \N\cup\{0\}$ and $s_i\in \{0,1\}$. One may identify 
$S^r(U^*)$ with the $GL(U)$-stable subspace $e(r)T^r(U^*)$ where $e(r):=\frac{1}{r!}\sum_{\sigma\in\Sym_r}\sigma$. Since the $GL(U)$-action on $T^r(U^*)$ commutes with the $\Sym_r$-action, so the action of $GL(U)$ on $T^r(U^*)$ descends to $S^r(U^*)$, thereby making $\varpi_r$ a $\gl(U)$-equivariant map.

In the case of $V_b^t$, 
let $T_{i_1\cdots i_b}^{j_1\cdots j_t}$ denote the linear functional on $V_b^t$ that takes value $1$ only on the basis vector $e_{i_1}\tensor \cdots \tensor e_{i_b}\tensor e^*_{j_1}\tensor \cdots \tensor e^*_{j_t}$ and $0$ on others. Under the natural identification, $(U_b^t)^*\iso (V_b^t)^*\tensor \Lambda$, we denote the $\Lambda$-linear map corresponding to $T_{i_1\cdots i_b}^{j_1\cdots j_t}$ also by the same notation. With this notation, following the same reasoning as in the case of $S(U^*)$, the super ring $S((U_b^t)^*)$ can be identified with the super ring of polynomials in the variables $T_{i_1\cdots i_b}^{j_1\cdots j_t}$ with the supercommutativity relation $T_{i_1\cdots i_b}^{j_1\cdots j_t}T_{i_1'\cdots i_b'}^{j_1'\cdots j_t'}=(-1)^{(\sum _{k}a_{i_k}+\sum_{l} a_{j_l})(\sum _{k}a_{i_k'}+\sum_{l} a_{j_l'})}T_{i_1'\cdots i_b'}^{j_1'\cdots j_t'}T_{i_1\cdots i_b}^{j_1\cdots j_t}$ where $a_{i}$ denotes the parity of the basis element $e_i$. More generally, for $W=\dsum_{i=1}^s U_{b_i}^{t_i}$ the mixed tensor space as defined above we use the natural identification of $W^*$ with $\dsum_{i=1}^s(U_{b_i}^{t_i})^*$ to obtain a $\Lambda$-basis for $W^*$ given by the linear maps $T(i)_{l_1\ldots l_{b_i}}^{u_1\ldots, u_{t_1}}\in (U_{b_i}^{t_i})^*$, for $i=1,\ldots, s$,  $1\leq l_j\leq m+n$ and $1\leq u_j\leq m+n$. Under this identification the linear map $T(i)_{l_1\ldots l_{b_i}}^{u_1\ldots, u_{t_i}}\in W^*$ evaluates on $w=(w_1,\ldots,w_s)$ to $T(i)_{l_1\ldots l_{b_i}}^{u_1\ldots u_{t_i}}(w_i)$. With this notation, the supersymmetric algebra $S(W^*)$ can be identified with the superalgebra of polynomials in the variables $T(l)_{i_1\cdots i_b}^{j_1\cdots j_t}$, $l=1,\ldots, s$ with the supercommutativity relation $T(l)_{i_1\cdots i_{b_l}}^{j_1\cdots j_{t_l}}T(l')_{i_1'\cdots i_{b_l}'}^{j_1'\cdots j_{t_l}'}=(-1)^{(\sum _{k}a_{i_k}+\sum_{l} a_{i_l})(\sum _{k}a_{i_k'}+\sum_{l} a_{i_l'})}T(l')_{i_1'\cdots i_{b_l}'}^{j_1'\cdots j_{t_l}'}T(l)_{i_1\cdots i_{b_l}}^{j_1\cdots j_{t_l}}$ where $a_{i}$ denotes the parity of the basis element $e_i$. From the above identifications of $S(W^*)$ and $S((U_{b_i}^{t_i})^*)$ as  polynomial algebras, it follows that there is a natural isomorphism from $\tensor_{i=1}^sS((U_{b_i}^{t_i})^*)$ to $S(W^*)$ given by $(\phi_1\tensor \cdots\tensor\phi_s)\mapsto \phi_1\cdots\phi_s$. 
\subsection{Graded picture invariants on $W$}\label{s:picture}
\newcommand{\ts}[1]{\psi_{#1}}
Let $W=\dsum_{i=1}^s U_{b_i}^{t_i}$ for an $s\in \N$ and non-negative integers $t_i, b_i$, $i=1,\ldots s$. The $\gl(U)$ action on $W$ induces an action on $T^r(W^*)$ for each $r\in\Z^{\geq 0}$. This action commutes with action of the symmetric group $\Sym_r$ for each $r>0$ hence the $\gl(U)$-action descends to the subspace $e(r)T^r(W^*)$ which in turn can be identified with $S^r(W^*)$. Thus, we note that the $\gl(U)$ action on $W$ descends to the associated supersymmetric algebra $S(W^*)$ of ``polynomials" on $W$. With respect to this action of $\gl(U)$ on $S(W^*)$ we are interested in finding a generating set for the invariant subalgebra $S(W^*)^{\gl(U)}$. For this, we define the notion of a {\it graded picture invariant} following \cite{dks}. In the supersymmetric setting, to an element of the symmetric group, we associate a polynomial in $S(W^*)$ which we shall call as a graded picture invariant. A {\it graded picture invariant} is defined as follows: given an $s$-tuple $(m_1,\ldots,m_s)$ of non-negative integers such that $\sum_{k=1}^s m_kt_k=\sum_{k=1}^s m_k b_k=N$ and a $\sigma\in\Sym_N$ we associate the polynomial $\ts{\sigma}\in S(W^*)$ given by \begin{equation}\label{eq:picture}
   \sum_{r_1,\ldots, r_N\in \{1,\ldots, n\}}(-1)^{p_\sigma(r_1,\ldots, r_N, m_1,\ldots, m_s)}\prod_{i=1}^{s}\prod_{j=1}^{m_i}T(i)_{r_{\sigma(\sum_{p<i}m_pb_p+(j-1)b_i+1)}\cdots r_{\sigma(\sum_{p<i}m_pb_p+jb_i)}}^{r_{(\sum_{p<i}m_pt_p+(j-1)t_i+1})\cdots r_{(\sum_{p<i}m_pt_p+jt_i)}}  
\end{equation}where $p_\sigma(I,M)$ for an $N$-tuple $I=(r_1,\ldots,r_N)$ of $N$ elements from $\{1,\ldots  n\}$ and $M=(m_1,\ldots, m_s)\in (\N\cup \{0\})^s$ such that $\sum_k m_{k}t_k=\sum_k m_k b_k=N$ takes the value $\pm 1$ determined by the formula 
$\gamma(I,\sigma^{-1})\times (-1)^{q(I,M,\sigma)}$
 where $q(I,M,\sigma)$ is
\begin{multline*}
\sum_{i=1}^s\sum_{j=1}^{m_i}\left[\left(\sum_{l=1}^{b_i}a_{\sigma(\sum_{p<i}m_pb_p+(j-1)b_i+l)}\right)\left(\sum_{k=1}^{\sum_{p<i} m_pt_p+(j-1)t_i}a_{k}\right)\right]+\\
\sum_{i=1}^N a_i+\sum_{i=1}^{N-1}a_{i}\left(\sum_{j=i+1}^N a_{j}\right)+p(\underline{w},\underline{w}).
\end{multline*}
\noindent
In the above formula, $a_i$ is the parity of $e_{r_i}$ (and also of its dual vector $e_{r_i}^*$)
and $\underline{w}:=w_1 \otimes w_2 \otimes \cdots \otimes w_{\sum_{i=1}^sm_i}$ where each $w_{\sum_{p<i}m_p+j}:=e_{r_{\sum_{p<i}m_pb_p+(j-1)b_i+1}}\tensor\cdots\tensor e_{r_{\sum_{p<i}m_pb_p+jb_i}}\tensor e_{r_{\sigma^{-1}(\sum_{p<i}m_pt_p+(j-1)t_i+1)}}^*\tensor\cdots\tensor e_{r_{\sigma^{-1}(\sum_{p<i}m_pt_p+jt_i)}}^*$ for $i=1,\ldots,s; ~j=1,\ldots ,m_i$. It can be seen that the degree of $w_{\sum_{p<i}m_p+j}$ is $$\sum_{k=1}^{b_i}a_{(\sum_{p<i}m_pb_p+(j-1)b_i+k}+\sum_{k=1}^{t_i}a_{\sigma^{-1}(\sum_{p<i}m_pt_p+(j-1)t_i+k)}.$$

Following the convention that for $\sigma\in S_{N}$, $\sigma.(r_1,\ldots, r_{N}):=(r_{\sigma^{-1}(1)},\ldots, r_{\sigma^{-1}(N)})$, it may be seen that $\gamma(\sigma. I,\tau^{-1})\gamma(I,\sigma^{-1})=\gamma(I,(\tau\sigma)^{-1})$ (see \S \ref{action}). Using this, we note that the above sign $(-1)^{q(I,M,\sigma)}\gamma(I,\sigma^{-1})$  may be realised as $\gamma(\mu^{-1}.(I,\sigma I),(\nu\tau\hat{\sigma}\mu)^{-1})$ where for a $\sigma\in S_N$ we define $\hat{\sigma}\in S_{2N}$ as 
\begin{align*}
        \hat{\sigma}(i)&=\sigma(i) \quad\textup{ for }i\leq N \\ \hat{\sigma}(i)&=i\quad\textup{ for }i>N
\end{align*}
and for an $M=(m_1,\ldots, m_s)\in (\N\cup \{0\})^s$ such that $\sum_{k=1}^s m_{k}t_k=\sum_{k=1}^s m_k b_k=N$ the permutation $\mu$ takes $(1,\ldots, 2N)$ to $(1,\ldots b_1, N+1, \ldots , N+t_1, b_1+1, \ldots 2b_1, \cdots )$. 


The next proposition establishes that the above graded picture invariants generate the invariant super ring $S(W^*)^{\gl(U)}$. This is analogous to the statement of \cite[Proposition~7]{dks}. Further, it may be remarked that the proof follows the same line of argument as in \cite[Proposition~7]{dks}. However, the identifications involved in the proof there will have to be modified to the braided category of $\Z_2$-graded vector spaces. This leads to polynomials which are different from that of {\it loc.cit}. 
\begin{theorem}\label{t:picture}
  Given a mixed tensor space $W=\oplus_{i=1}^s U_{b_i}^{t_i}$, the graded picture invariants as defined above $\Lambda$-linearly span $S(W^*)^{\gl(U)}$.
\end{theorem}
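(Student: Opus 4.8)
The plan is to reduce the statement to the graded Schur--Weyl duality over $\Lambda$ together with the tensor FFT (Theorem~\ref{fft-gln} and its $\Lambda$-version in \S\ref{s.fft.gln}), via a chain of $\gl(U)$-equivariant identifications of the relevant invariant spaces with $\End_{\gl(U)}$ of a suitable tensor power. First I would fix an $s$-tuple $M=(m_1,\ldots,m_s)$ with $\sum_k m_kt_k=\sum_k m_k b_k=N$ and restrict attention to the graded-multidegree-$M$ component of $S(W^*)$, namely the image under $\varpi$ of $e(M)\bigl(\bigotimes_{i=1}^s (U_{b_i}^{t_i})^{*\otimes_\Lambda m_i}\bigr)$ inside $T(W^*)$; since $\gl(U)$ preserves each such component, it suffices to span the invariants in each one. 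Using the natural isomorphism $\bigotimes_i S((U_{b_i}^{t_i})^*)\iso S(W^*)$ and the identification $S^r\iso e(r)T^r$ recorded in the text, this component is a $\gl(U)$-stable quotient (equivalently, a symmetrized subspace) of $(U^*)^{\otimes N}\otimes_\Lambda U^{\otimes N}$, after reindexing the $U^*$ tensor factors coming from the $t_i$ contravariant slots and the $U$ factors coming from the $b_i$ covariant slots. Here the bookkeeping permutation $\mu\in S_{2N}$ introduced before the theorem is exactly what sorts the factors $(U^*)^{\otimes t_1}\otimes\cdots$ interleaved against $U^{\otimes b_1}\otimes\cdots$ into the shape $(U^*)^{\otimes N}\otimes U^{\otimes N}$, and the permutation $\nu\tau$ of isomorphisms 5 and 7 in \S\ref{iso} converts $U^{\otimes N}\otimes(U^*)^{\otimes N}$ into $\End_\Lambda(U^{\otimes N})$.

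The key steps, in order, are as follows. (1) Identify the multidegree-$M$ piece of $S(W^*)$ with $e(M)\Theta^{-1}\bigl(\End_\Lambda(U^{\otimes N})\bigr)$ for an appropriate symmetrizer $e(M)$ built from the products of the $S_{m_i}$-symmetrizers transported through $\mu$; this uses only the isomorphisms catalogued in \S\ref{iso}, which by the Remark there hold verbatim over $\Lambda$. (2) Pass to invariants: since the diagonal $\gl(U)$-action on $U^{\otimes N}\otimes (U^*)^{\otimes N}$ corresponds to the conjugation action on $\End_\Lambda(U^{\otimes N})$, we get $\bigl((U^*)^{\otimes N}\otimes U^{\otimes N}\bigr)^{\gl(U)}\iso \End_{\gl(U)}(U^{\otimes N})$, and by the $\Lambda$-Schur--Weyl duality (the Theorem just before \S3, and \S\ref{s.fft.gln}) this equals $\eta(\Lambda S_N)$, spanned $\Lambda$-linearly by the $\theta_\sigma$, $\sigma\in S_N$. (3) Take the further invariants of the symmetrized piece: because $e(M)$ is a $\gl(U)$-equivariant idempotent, applying it to the $\theta_\sigma$ gives a spanning set for the invariants of the $M$-component, and one checks that $\varpi(e(M)\theta_\sigma)$ is, up to the sign contributions assembled in $p_\sigma(I,M)$, exactly the graded picture invariant $\psi_\sigma$ defined in \eqref{eq:picture}. (4) Sum over all admissible $M$ to conclude.

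The main obstacle is step (3): verifying that the signs produced by transporting $\theta_\sigma$ through the composite isomorphism $\Theta^{-1}$, then through $\mu$, then applying the symmetrizer $e(M)$ and finally the projection $\varpi$ to the supersymmetric algebra, combine precisely into the exponent $q(I,M,\sigma)$ (together with $\gamma(I,\sigma^{-1})$) appearing in the definition of $\psi_\sigma$. Each isomorphism in \S\ref{iso} carries its own Koszul sign --- the $p(\underline f^*,\underline v)$ of isomorphism 3, the $p(\underline w,\underline w)$ of isomorphism 6, the $\gamma(\,\cdot\,,\tau^{-1})$ of isomorphism 5, and the reordering signs from $\mu$ --- and the content of the proof is the assertion, flagged already in the remark before the theorem, that their product is $\gamma(\mu^{-1}.(I,\sigma I),(\nu\tau\hat\sigma\mu)^{-1})$. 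I would handle this by the cocycle identity $\gamma(\underline v,\sigma\tau)=\gamma(\sigma^{-1}\underline v,\tau)\gamma(\underline v,\sigma)$ recorded in \S\ref{action}, which lets one collapse the chain of braiding maps into a single $\gamma$ of the product permutation evaluated at $\mu^{-1}.(I,\sigma I)$; everything else (the $\gl(U)$-equivariance of $\varpi_r$, the idempotency of $e(r)$, and that $\theta_\sigma\mapsto\eta_N(\sigma)$ under $\Theta$) is already available from the earlier sections. The remaining steps (1), (2), (4) are formal consequences of the cited results and require no genuinely new input.
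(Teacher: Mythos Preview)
Your proposal is correct and follows essentially the same route as the paper: reduce to a fixed multidegree $M=(m_1,\ldots,m_s)$, unwind the chain of $\gl(U)$-equivariant isomorphisms from \S\ref{iso} to identify the $M$-component with (a symmetrized piece of) $(U^{\otimes N}\otimes(U^*)^{\otimes N})^*\cong\End_\Lambda(U^{\otimes N})$, invoke Schur--Weyl over $\Lambda$ to get the $\theta_\sigma$ as a spanning set, and then track the accumulated Koszul signs through $\mu$, $\tau$, $\nu$ and the symmetrizer. The paper phrases the identification on the dual side via $\Theta$ (isomorphism~(7) of \S\ref{iso}) rather than directly via $\theta_\sigma$, but this is only a cosmetic difference, and your identification of the sign bookkeeping as the single nontrivial step, together with the use of the cocycle identity for $\gamma$ to collapse the chain into $\gamma(\mu^{-1}.(I,\sigma I),(\nu\tau\hat\sigma\mu)^{-1})$, matches the paper's treatment exactly.
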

\begin{proof}
The degree $n$ component of the ring $S(W^*)$, denoted as $S^n(W^*)$,  is the image of $T^n(W^*)$ under $\varpi_n$.  The basis for $S^n(W^*)$ is then given by the degree $n$ monomials in the super-commuting variables $T(i)_{l_1\cdots l_{b_i}}^{u_1\cdots u_{t_i}}$ arranged in order from $i=1$ to $i=s$ and a fixed ordering of the standard basis of $U_{b_i}^{t_i}$ for each $i$. Hence it can be seen that $$S^n(W^*)\iso \oplus_{\substack{m_1,\ldots, m_s\\ m_1+\cdots+m_s=n}}(\tensor_{i=1}^sS^{m_i}((U_{b_i}^{t_i})^*)).$$ This isomorphism may be structurally described as coming from the degree $n$ component of $S(W^*)$ being expressed as a direct sum of its  $(m_1,\ldots, m_s)$-multidegree summands obtained by setting the multidegree of the basis element $T(i)_{l_1 \ldots l_{b_i}}^{u_1,\ldots, u_{t_i}}$ to be the $s$-tuple $(0,\ldots,0, 1,0, \ldots)$ with $1$ only in the $i^\textit{th}$-coordinate.
We  have the natural  surjection from $\varpi_{m_i}:((U_{b_i}^{t_i})^*)^{\tensor m_i} \to S^{m_i}((U_{b_i}^{t_i})^*)$ for each $i$. Since the $\gl(U)$-action on $((U_{b_i}^{t_i})^*)^{\tensor m_i}$ commutes with the $\Sym_{m_i}$-action and $S^{m_i}((U^{t_i}_{b_i})^*)$ can be identified under $\varpi_{m_i}$ with the subspace $e(m_i)\left(((U_{b_i}^{t_i})^*)^{\tensor m_i}\right)$ where $e(m_i):=\frac{1}{m_i!}\sum_{\sigma\in\Sym_{m_i}}\sigma$, so the $\gl(U)$-action descends to $S^{m_i}((U_{b_i}^{t_i})^*)$ making $\varpi_{m_i}$ to be $\gl(U)$-equivariant. Thereby, we get a $\gl(U)$-equivariant surjection
$\varpi_{m_1}\tensor\ldots\tensor \varpi_{m_s}:\tensor_{i=1}^s((U_{b_i}^{t_i})^*)^{\tensor m_i} \twoheadrightarrow \tensor_{i=1}^sS^{m_i}((U_{b_i}^{t_i})^*)$. 
Set $W_{\sum_{j<r} m_j+i}:=U_{b_r}^{t_r}$ for $i=1,\ldots ,m_r$. Then,
by the isomorphism listed in (6) of \S \ref{iso} we have,
\begin{equation}\label{eq:1}(\tensor_{i=1}^kW_i)^*\iso \tensor_{i=1}^k(W_i^*)\end{equation} where $k=\sum_i m_i$, given by sending the basis element $T_{w_1\tensor \cdots \tensor w_{\sum_i m_i}}\mapsto (-1)^{p(\underline{w},\underline{w})}T(1)_{w_1}\tensor\cdots\tensor T(s)_{w_{\sum_i m_i}}$.  Here $w_i\in W_i$ denotes a basis vector of $W_i$ and if $W_i=U_{b_r}^{t_r}$ then the dual basis vector corresponding to $w_i$ is denoted as $T(r)_{w_i}$. Thus we have identified $(\tensor_{i=1}^s(U_{b_i}^{t_i})^{\tensor m_i})^*$ with $\tensor_{i=1}^s((U_{b_i}^{t_i})^*)^{\tensor m_i} $  by the above isomorphism.  More explicitly, for an index set $L=(l_1,\ldots, l_{\sum_i m_ib_i}),$ $ U=(u_1,\ldots, u_{\sum_i m_i t_i})$  the above isomorphism takes $T_{l_1,\ldots,l_{b_1},u_1\ldots, u_{t_1},l_{b_1+1},\ldots ,l_{2b_1},\ldots}$ 
to $(-1)^{p(\underline{w},\underline{w})}T(1)_{l_1\ldots l_{b_1}}^{u_1\ldots u_{t_1}}\tensor T(1)_{l_{b_1+1}\ldots l_{2b_1}}^{u_{t_1+1}\ldots u_{2t_1}}\tensor\cdots\in \tensor_{i=1}^s((U_{b_i}^{t_i})^*)^{\tensor m_i} $.  We then apply the permutation $\mu$ as defined earlier, which induces an isomorphism from $\tensor_{i=1}^s(U_{b_i}^{t_i})^{\tensor m_i}\to U^{\tensor (\sum_i m_ib_i)}\tensor (U^*)^{\tensor(\sum_i m_it_i)} $ and hence also on their duals. For the index set $L=(l_1,\ldots, l_{\sum_i m_ib_i}),$ $ U=(u_1,\ldots, u_{\sum_i m_i t_i})$  this isomorphism \begin{equation}\label{eq:2}
     \left(U^{\tensor (\sum_i m_ib_i)}\tensor (U^*)^{\tensor(\sum_i m_it_i)}\right)^*\to (\tensor_{i=1}^s(U_{b_i}^{t_i})^{\tensor m_i})^*
\end{equation} takes the element $T_{L,U}$
to  $\gamma(\mu^{-1}.(L,U), \mu^{-1})T_{l_1,\ldots,l_{b_1},u_1\ldots, u_{t_1},l_{b_1+1},\ldots ,l_{2b_1}},\ldots\in(\tensor_{i=1}^s(U_{b_i}^{t_i})^{\tensor m_i})^*$. A standard argument as outlined at the end of \S3 gives us that $U^{\tensor (\sum_i m_ib_i)}\tensor (U^*)^{\tensor(\sum_i m_it_i)}$ has invariants if and only if $\sum_i m_ib_i=\sum_i m_it_i$. From Theorem~1.1 we get a spanning set of $\Endo(U^{\tensor N})^{\gl(U)}$. So, in the case when $\sum_i m_ib_i=\sum_i m_it_i=N$, going by the isomorphism in (7) of \S \ref{iso}, we get invariants in $\left(U^{\tensor N}\tensor (U^*)^{\tensor N}\right)^*$ given by $\Theta(\sigma)$, where $\sigma \in S_N$ is regarded as an element of $\Endo_{\Lambda}(U^{\otimes N})$. As noted in \S \ref{iso} above, $\Theta(\sigma)(\underline{v}\tensor \underline{f})=ev[\nu.\tau.(\sigma(\underline{v})\tensor \underline{f}^*)]$. It can be easily seen that $\Theta(\sigma)$ can be expressed in terms of the basis of $(U^{\tensor N}\tensor (U^*)^{\tensor N})^*$ as \begin{equation}\label{eq:3}
    \sum_{L=(l_1,\ldots, l_N)}\gamma(L,\sigma^{-1})\gamma(\sigma L,\sigma L,\tau^{-1}) \gamma(\tau.(\sigma L,\sigma L),\nu^{-1})T_{L,\sigma L}.
\end{equation}Now going through the isomorphisms given in (\ref{eq:1}), (\ref{eq:2}), it can be seen that this element corresponds to 
the graded picture invariants defined above.
\end{proof}

\begin{remark}
In the above proof the invariants $\Theta(\sigma)$ are the images  of $\theta_{\sigma}$ in \S \ref{s.fft.gln} under the isomorphism from $(U^{\tensor N}\tensor (U^*)^{\tensor N})$ to its dual $(U^{\tensor N}\tensor (U^*)^{\tensor N})^*$. 
    
\end{remark}

\begin{remark}\label{r:linear}
      The linear map on $\tensor_{i=1}^s(U_{b_i}^{t_i})^{\tensor m_i}$ obtained by applying the isomorphism in \emph{(\ref{eq:2})} to $\Theta(\sigma)$ in the above proposition is obtained by applying a sequence of permutations, namely $\nu.\tau.\hat{\sigma}.\mu$, followed by the evaluation map. In the case that $W=\dsum_{i=1}^sU_1^1$, we note that $\mu^{-1}$ is just the permutation $\tau$. 
\end{remark}
\subsection{Restitution map and realizing linear maps as multilinear polynomials on $W_0$}
Following \cite[\S3]{LZ}, we consider the $\Lambda$-module of functions from $W\to \Lambda$ denoted by $\mathcal{F}(W,\Lambda)$. Let $F^r:T^r(W^*)\to \mathcal{F}(W,\Lambda)$ be given by $F^r({\boldsymbol{\alpha}})(v)=ev({\boldsymbol{\alpha}}\tensor v\tensor\cdots\tensor v)$ where $ev:T^{r}(W^*)\tensor T^r(W)\to \Lambda$ is same as in (3) above. It is noted in \cite[Lemma~3.11]{LZ} that $F^r$ restricts to map $F^r:S^r(W^*)\to \mathcal{F}(W_0)$; further, it is shown in \cite[Prop~3.14]{LZ} that this restriction is an isomorphism from $S^r(W^*)\to \mathcal{P}^r(W_0)$ where $\mathcal{P}^r(W_0)$ is the image of $F^r$ in $\mathcal{F}(W_0,\Lambda)$. The space $\mathcal{P}^r(W_0)$ is called the space of ``polynomials of degree $r$ on $W_0$".  For $f\in S^r(W^*)$ and $w\in W_0$, we note that $F^r(f)(w)=ev(\boldsymbol{f}\tensor w\tensor w\tensor w\tensor\ldots r\textit{-times})$ where $\boldsymbol{f}\in T^r(W^*)$ such that $\varpi_r(\boldsymbol{f})=f$ where $\varpi_r: T^r(W^*)\to S^r(W^*)$ is the natural quotient map.

Let $W=\dsum_{i=1}^sU_{b_i}^{t_i}$. Since $W^*\iso \dsum_{i=1}^s(U_{b_i}^{t_i})^*$, the set of all linear maps $T(i)_{l_1\ldots l_{b_i}}^{u_1\ldots, u_{t_1}}\in (U_{b_i}^{t_i})^*$, for each $i=1,\ldots, s$,  $1\leq l_j\leq m+n$ and $1\leq u_j\leq m+n$ gives a $\Lambda$-basis of $W^*$, which shall also be denoted by the same notation; regarded as a linear map on $W^*$, $T(i)_{l_1\ldots l_{b_i}}^{u_1\ldots, u_{t_1}}(u_1,\ldots,u_s):=T(i)_{l_1\ldots l_{b_i}}^{u_1\ldots, u_{t_1}}(u_1)$. Setting the multidegree of $T(i)_{l_1\ldots l_{b_i}}^{u_1\ldots u_{t_1}}\in W^*$ to be $(0,\ldots, 0,1,0\ldots, 0)$ with $1$ in the $i^{\textup{th}}$-position and $0$ elsewhere, denote by $S^{(m_1,\ldots,m_s)}(W^*)$ the  $(m_1,\ldots, m_s)$-multidegree summand of $S^r(W^*)$. Then monomials $\prod_{i=1}^sT(i)_{l_1\ldots l_{b_i}}^{u_1\ldots u_{t_i}}T(i)_{l_{b_i+1}\ldots l_{2b_i}}^{u_{t_i+1}\ldots u_{2t_i}}\cdots T(i)_{l_{(m_i-1)b_i+1}\ldots l_{m_ib_i}}^{u_{(m_i-1)t_i+1}\ldots u_{m_it_i}}$ where the $l_j$'s and $u_j$'s vary in $\{1,\ldots, m+n\}$ forms a basis of $S^{(m_1,\ldots,m_s)}(W^*)$. It is obvious that there is a natural isomorphism between $\tensor_{i=1}^sS^{m_i}((U_{b_i}^{t_i})^*)$ and $S^{(m_1,\ldots,m_s)}(W^*)$. Also, note that $\tensor_{i=1}^s((U_{b_i}^{t_i})^*)^{\tensor m_i}\hookrightarrow T^r(W^*)$ where $r=\sum_{i=1}^sm_i$.  
\begin{lemma}\label{l:tensorsymm}
    With notations as above, the following diagram commutes\[\xymatrix{\tensor_{i=1}^s((U_{b_i}^{t_i})^*)^{\tensor m_i}\ar@{->>}[r]^{\varpi_{m_1}\tensor\ldots\tensor \varpi_{m_s}}\ar@{^{(}->}[dd]&\tensor_{i=1}^sS^{m_i}((U_{b_i}^{t_i})^*)\ar[d]^{\cong}\\&S^{(m_1,\ldots, m_s)}(\dsum_{i=1}^s(U_{b_i}^{t_i})^*)\ar@{^{(}->}[d]\\T^r(W^*)\ar@{->>}[r]^{\varpi_r}&S^r(W^*)
}\]
\end{lemma}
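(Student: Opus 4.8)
The plan is to verify the commutativity of the square by chasing a basis element of the top-left corner around both paths and checking that the two images in $S^r(W^*)$ agree. Since all four maps in the diagram are $\Lambda$-linear, it suffices to test on the standard $\Lambda$-basis of $\tensor_{i=1}^s((U_{b_i}^{t_i})^*)^{\tensor m_i}$, namely the pure tensors
\[
T(1)_{l_1\ldots l_{b_1}}^{u_1\ldots u_{t_1}}\tensor\cdots\tensor T(1)_{l_{(m_1-1)b_1+1}\ldots l_{m_1b_1}}^{u_{(m_1-1)t_1+1}\ldots u_{m_1t_1}}\tensor T(2)_{\cdots}^{\cdots}\tensor\cdots\tensor T(s)_{\cdots}^{\cdots},
\]
indexed by an appropriate tuple of indices $(l_j)$, $(u_j)$ lying in $\{1,\ldots,m+n\}$. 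First I would push such an element down the left edge: the vertical inclusion $\tensor_{i=1}^s((U_{b_i}^{t_i})^*)^{\tensor m_i}\hookrightarrow T^r(W^*)$ is the one noted just before the lemma, sending the pure tensor of $T(i)$'s to the corresponding pure tensor in $T^r(W^*)$ (viewing each $T(i)_{\cdots}^{\cdots}$ as an element of $W^*$ via the identification $W^*\iso\dsum_i(U_{b_i}^{t_i})^*$); then $\varpi_r$ sends this to the monomial $\prod_{i=1}^s\prod_{j=1}^{m_i}T(i)_{\cdots}^{\cdots}$ in $S^r(W^*)$, where the product is taken in the fixed order $i=1,\ldots,s$ and $j=1,\ldots,m_i$. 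No sign appears here because along this edge we never reorder any tensor factors.

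Next I would push the same element along the right edge. The map $\varpi_{m_1}\tensor\cdots\tensor\varpi_{m_s}$ is a tensor product of natural quotient maps, so it sends the pure tensor to the tensor product of monomials $\bigotimes_{i=1}^s\left(\prod_{j=1}^{m_i}T(i)_{\cdots}^{\cdots}\right)$, where inside each $S^{m_i}((U_{b_i}^{t_i})^*)$ the $m_i$ factors are multiplied in order; again no sign, since $\varpi_{m_i}$ is just the quotient by the supersymmetrizer ideal and monomials are by convention written in the standard order. Then the isomorphism $\tensor_{i=1}^sS^{m_i}((U_{b_i}^{t_i})^*)\cong S^{(m_1,\ldots,m_s)}(\dsum_i(U_{b_i}^{t_i})^*)$ — which, as remarked in the text, is induced by $(\phi_1\tensor\cdots\tensor\phi_s)\mapsto\phi_1\cdots\phi_s$ and is precisely the multidegree-$(m_1,\ldots,m_s)$ restriction of the isomorphism $\tensor_{i=1}^sS((U_{b_i}^{t_i})^*)\cong S(W^*)$ established earlier — carries $\bigotimes_i(\prod_j T(i)_{\cdots}^{\cdots})$ to the product $\prod_{i=1}^s\prod_{j=1}^{m_i}T(i)_{\cdots}^{\cdots}$ in $S^{(m_1,\ldots,m_s)}(W^*)$; and finally the vertical inclusion $S^{(m_1,\ldots,m_s)}(W^*)\hookrightarrow S^r(W^*)$ simply views this product inside the full symmetric algebra, yielding the same monomial as on the left edge.

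I expect the only genuine subtlety — and hence the one point to treat carefully rather than dismiss — to be bookkeeping of signs: one must confirm that the supercommutation relations are consistently applied so that "the monomial in the fixed order" has an unambiguous meaning in all four corners, and that the isomorphism $\tensor_i S^{m_i}\cong S^{(m_1,\ldots,m_s)}$ genuinely matches the degreewise restriction of the earlier-established algebra isomorphism $\tensor_i S((U_{b_i}^{t_i})^*)\cong S(W^*)$ rather than some sign-twisted variant. Since all the reorderings needed to bring a pure tensor into standard order are the \emph{same} on both paths (the ordering is $i=1,\ldots,s$, then $j=1,\ldots,m_i$, then the internal index order, in every corner), whatever sign $\epsilon$ is picked up is identical along both edges, and the square commutes. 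I would record this by observing that both composites send the chosen basis element to $\epsilon\cdot\prod_{i=1}^s\prod_{j=1}^{m_i}T(i)_{r_{(j-1)b_i+1}^{(i)}\ldots}^{\ldots}$ with the same $\epsilon$, and invoke $\Lambda$-linearity to conclude.
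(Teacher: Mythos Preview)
Your proposal is correct and follows essentially the same approach as the paper: both proofs verify commutativity by checking that a standard basis element of $\tensor_{i=1}^s((U_{b_i}^{t_i})^*)^{\tensor m_i}$ is sent to the same monomial in $S^r(W^*)$ along either path. Your treatment is more explicit about the sign bookkeeping, whereas the paper simply declares it ``an easy checking,'' but the argument is the same.
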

\begin{proof}
    It is an easy checking to observe that the basis elements \[T(1)_{l_1\ldots l_{b_1}}^{u_1\ldots u_{t_1}}\tensor \ldots \tensor T(i)_{l_{\sum_{j<i}m_jb_j+(kb_i)+1}\ldots l_{\sum_{j<i}m_jb_j+(k+1)b_i)}}^{u_{\sum_{j<i}m_jt_j+(kt_i)+1}\ldots u_{\sum_{j<i}m_jt_j+(k+1)t_i)}}\tensor \ldots\tensor T(s)_{l_{\sum_{j<s}m_jb_j+((m_s-1)b_s)+1}\ldots l_{\sum_{j<i}m_jb_j+m_sb_s)}}^{u_{\sum_{j<s}m_jt_j+((m_s-1)t_s)+1}\ldots u_{\sum_{j<i}m_jt_j+m_st_s)}}\] of $\tensor_{i=1}^s((U_{b_i}^{t_i})^*)^{\tensor m_i}$
    goes to the same monomial via both $\varpi_r$ and $\varpi_{m_1}\tensor\ldots\tensor \varpi_{m_s}$, as required.
\end{proof}
\begin{lemma}\label{l:restitution}
With notations as above, for each $s$-tuple of non-negative integers $(m_1,\ldots, m_s)$ 
    the map $F^r$ restricts to an isomorphism from $S^{(m_1,\ldots,m_s)}(W^*)\to \mathcal{P}^{(m_1,\ldots,m_s)}(W_0)$ where $\mathcal{P}^{(m_1,\ldots,m_s)}(W_0)$ denotes the image of $S^{(m_1,\ldots,m_s)}(W^*)$ in $\mathcal{F}(W_0,\Lambda)$. Further under the natural identification of $S^{(m_1,\ldots,m_s)}(W^*)$ with $\tensor_{i=1}^sS^{m_i}((U_{b_i}^{t_i})^*)$,  the map $F^{(m_1,\ldots,m_s)}:=F^r\circ(\varpi_{m_1}\tensor \ldots\tensor \varpi_{m_s}):T^{m_1}((U_{b_1}^{t_1})^*)\tensor\ldots\tensor T^{m_s}((U_{b_s}^{t_s})^*)\to \mathcal{P}^{(m_1,\ldots,m_s)}(W_0)$ is given by $F^{(m_1,\ldots,m_s)}(\boldsymbol{f_1}\tensor\ldots\tensor \boldsymbol{f_s})(u_1,\ldots,u_s)=ev(\boldsymbol{f_1}\tensor (u_1\tensor u_1\tensor \ldots m_1{\textit{-times}})) \cdots ev(\boldsymbol{f_s}\tensor (u_s\tensor u_s\tensor \ldots m_s{\textit{-times}}))$. In other words, the following diagram commutes 
    \[\xymatrix{&\tensor_{i=1}^sS^{m_i}((U_{b_i}^{t_i})^*)\ar[d]^{\cong}\ar[r]^{F^{m_1}\tensor\ldots\tensor F^{m_s}  }&\tensor_{i=1}^s\mathcal{F}((U_{b_i}^{t_i})_0,\Lambda)\ar[dd]\\\tensor_{i=1}^s((U_{b_i}^{t_i})^*)^{\tensor m_i}\ar@{->>}[ru]^{\varpi_{m_1}\tensor\ldots\tensor \varpi_{m_s}}\ar@{->}[rd]&S^{(m_1,\ldots, m_s)}(\dsum_{i=1}^s(U_{b_i}^{t_i})^*)\ar@{^{(}->}[d]\\ &S^r(W^*)\ar[r]^{F^r}&\mathcal{F}(W_0,\Lambda)
}\]
\end{lemma}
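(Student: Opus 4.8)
The plan is to establish Lemma~\ref{l:restitution} by reducing it, via Lemma~\ref{l:tensorsymm}, to the already-known facts about the single-summand restitution map from \cite[\S3]{LZ}, and then tracking how the evaluation map behaves on the tensor-product decomposition of the mixed tensor space. First I would observe that the multidegree decomposition $S^r(W^*) = \bigoplus_{(m_1,\ldots,m_s)} S^{(m_1,\ldots,m_s)}(W^*)$ is $\gl(U)$-stable and that $F^r$ respects it, because the map $w \mapsto (w,\ldots,w)$ used in the restitution construction sends $W_0$ diagonally into $\bigoplus_i (U_{b_i}^{t_i})_0$, so evaluating a monomial of multidegree $(m_1,\ldots,m_s)$ on the diagonal factors through the product of evaluations on the individual coordinates $u_i \in (U_{b_i}^{t_i})_0$. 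Since \cite[Prop~3.14]{LZ} gives that $F^{m_i}: S^{m_i}((U_{b_i}^{t_i})^*) \to \mathcal{P}^{m_i}((U_{b_i}^{t_i})_0)$ is an isomorphism for each $i$, the tensor product $F^{m_1}\tensor\cdots\tensor F^{m_s}$ is an isomorphism onto $\tensor_{i=1}^s \mathcal{P}^{m_i}((U_{b_i}^{t_i})_0)$, and one checks that the composite down the right-hand column lands in (and surjects onto) $\mathcal{P}^{(m_1,\ldots,m_s)}(W_0)$; injectivity of $F^r$ restricted to this summand follows from injectivity of $F^r$ on all of $S^r(W^*)$ (again \cite[Prop~3.14]{LZ}).

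Next I would pin down the explicit formula for $F^{(m_1,\ldots,m_s)}$. By Lemma~\ref{l:tensorsymm} the natural identification of $\tensor_{i=1}^s S^{m_i}((U_{b_i}^{t_i})^*)$ with $S^{(m_1,\ldots,m_s)}(W^*)$ is compatible with the quotient maps $\varpi$, so it suffices to evaluate $F^r\circ \varpi_r$ on a decomposable tensor $\boldsymbol{f_1}\tensor\cdots\tensor\boldsymbol{f_s}$ with $\boldsymbol{f_i} \in T^{m_i}((U_{b_i}^{t_i})^*) \subset T^r(W^*)$ after reordering. By definition $F^r(\varpi_r(\boldsymbol{f_1}\tensor\cdots\tensor\boldsymbol{f_s}))(u_1,\ldots,u_s) = ev\big((\boldsymbol{f_1}\tensor\cdots\tensor\boldsymbol{f_s})\tensor (u_1,\ldots,u_s)^{\tensor r}\big)$, and since each tensor factor of $\boldsymbol{f_i}$ lies in $(U_{b_i}^{t_i})^*$, it annihilates the $(u_j)$-components for $j\ne i$; so the only surviving term pairs the $\boldsymbol{f_i}$-block against $m_i$ copies of $u_i$. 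The one genuine subtlety is the Koszul sign generated by commuting the $u$-factors past the functionals in the evaluation pairing of \S\ref{iso}(3) — here I would note that within a single multidegree summand all the blocks $\boldsymbol{f_i}$ and the inserted copies of $u_i$ get sorted into the standard order without any cross-block sign, precisely because $ev$ on $T^r(W^*)\tensor T^r(W)$ matches the $j$-th functional slot with the $j$-th vector slot and the reordering into $i$-blocks is applied coherently to both sides. This yields $F^{(m_1,\ldots,m_s)}(\boldsymbol{f_1}\tensor\cdots\tensor\boldsymbol{f_s})(u_1,\ldots,u_s) = \prod_{i=1}^s ev(\boldsymbol{f_i}\tensor u_i^{\tensor m_i})$, which is exactly the claimed factorization and makes the stated diagram commute.

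I expect the main obstacle to be the careful bookkeeping of the Koszul/braiding signs: one must verify that the sign $p(\underline{w},\underline{w})$-type contributions appearing when passing between $(\tensor_i W_i)^*$ and $\tensor_i W_i^*$ (as in \S\ref{iso}(6)) and when inserting the diagonal copies of each $u_i$ cancel or are absorbed into the individual single-summand restitution maps $F^{m_i}$, so that no residual cross-term sign survives in the product formula. Once that sign-chase is done, commutativity of both squares in the diagram is formal: the top square is the content of Lemma~\ref{l:tensorsymm} tensored with the target, the lower-left triangle is the definition of the multidegree identification, and the outer rectangle is the defining factorization of $F^r$ restricted to the multidegree summand. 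I would finish by remarking that summing over all $(m_1,\ldots,m_s)$ with $\sum m_i = r$ recovers the isomorphism $S^r(W^*) \xrightarrow{\sim} \mathcal{P}^r(W_0)$ of \cite[Prop~3.14]{LZ} as a direct sum of the pieces just constructed.
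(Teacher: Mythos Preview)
Your proposal is correct and follows essentially the same route as the paper: deduce the first part from the injectivity of $F^r$ on $S^r(W^*)$ (\cite[Prop.~3.14]{LZ}), and for the second part use Lemma~\ref{l:tensorsymm} to identify $\varpi_{m_1}\tensor\cdots\tensor\varpi_{m_s}$ with $\varpi_r$ on the relevant summand and then compute $F^r$ explicitly on a decomposable tensor. The sign-chase you flag as the main obstacle is dispatched in the paper in a single sentence: since $\boldsymbol{u}=(u_1,\ldots,u_s)\in W_0$ has parity~$0$, every factor $|v_j|$ in the exponent $p(\underline{f}^*,\underline{v})$ of the evaluation pairing (\S\ref{iso}(3)) vanishes, so $ev(\boldsymbol{f_1}\tensor\cdots\tensor\boldsymbol{f_s}\tensor\boldsymbol{u}^{\tensor r})$ factors as $\prod_i ev(\boldsymbol{f_i}\tensor u_i^{\tensor m_i})$ with no cross-block sign whatsoever --- no coherence argument or cancellation is needed.
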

\begin{proof}The first part is a consequence of the fact noted earlier that $F^r:S^r(W^*)\to \mathcal{F}(W_0)$ is injective. For the second part, note that under the natural isomorphism between $\tensor_{i=1}^sS^{m_i}((U_{b_i}^{t_i})^*)$ and $S^{(m_1,\ldots,m_s)}(W^*)$, the monomial $f_1\cdots f_s \in S^{(m_1,\ldots,m_s)}(W^*)$ can be identified with the element $f_1\tensor\cdots \tensor f_s \in \tensor_{i=1}^sS^{m_i}((U_{b_i}^{t_i})^*)$ where $f_i\in S^{m_i}((U_{b_i}^{t_i})^*)$. Then the commutativity of the diagram in Lemma~\ref{l:tensorsymm} implies that $ F^r(f_1\cdots f_s)(\boldsymbol{u})=F^r({f_1}\tensor \cdots \tensor {f_s})(\boldsymbol{u})=ev(\boldsymbol{f_1}\tensor \cdots \tensor \boldsymbol{f_s}\tensor \boldsymbol{u}\tensor\boldsymbol{u}\tensor \ldots  \textup{($\sum_im_i$-times)} )=ev(\boldsymbol{f_1}\tensor \boldsymbol{u}\tensor\boldsymbol{u}\tensor\ldots \textup{($m_1$-times)}\ldots\tensor\cdots \tensor \boldsymbol{f_s}\tensor\boldsymbol{u}\tensor \boldsymbol{u}\tensor\ldots \textup{($m_s$ -times)}\ldots)$
where $\boldsymbol{u}=(u_1,\ldots,u_s)\in W_0$ and ${\boldsymbol{f_i}}\in T^{m_i}((U_{b_i}^{t_i})^*)$ such that $\varpi_{m_i}(\boldsymbol{f_i})=f_i$. Note that, since $\boldsymbol{u}$ is a degree $0$ element there is no sign change in the above equality. The right side of the above equality evaluates to $\mathcal{F}^{m_1}({\boldsymbol{f_1}})(u_1)\cdots \mathcal{F}^{m_1}({\boldsymbol{f_s}})(u_s)$. 
\end{proof}

\begin{remark}\label{r:restitution} We note from the commutativity of the above diagram and with the notations as in the above lemma, $F^r(\boldsymbol{T})(\boldsymbol{u})=ev(\boldsymbol{T}\tensor u_1\tensor u_1\tensor \cdots \textup{($m_1$-times)} \tensor\cdots\tensor u_s\tensor u_s\tensor\cdots  \textup{($m_s$-times)}$ for $\boldsymbol{T}\in \tensor_{i=1}^sT^{m_i}((U_{b_i}^{t_i})^*)$ and $\boldsymbol{u}=(u_1,\ldots,u_s)\in W$ 
\end{remark}

The image of $S^{(m_1,\ldots,m_s)}(W^*)$ in $\mathcal{F}(W_0,\Lambda)$ is the space of ``multihomogeneous polynomials on $W_0$", denoted as $\mathcal{P}^{(m_1,\ldots,m_s)}(W_0)$. And the map $F^r$ corresponds to the restitution map  $F_1^{m_1}\tensor \cdots\tensor F_s^{m_s}:\tensor_{i=1}^sS^{m_i}((U_{b_i}^{t_i})^*)\to \mathcal{P}^{(m_1,\ldots,m_s)}(W_0)$. When $m_i=1$ for all $i1,\ldots, s$, this is the space of multilinear polynomials on $W_0$. 

\subsubsection{Polynomial invariants and `pictures'.} By a polynomial invariant on $W_0$, we shall mean an element of $\mathcal{F}(W_0,\Lambda)$ which is in the image of $S(W^*)^{\gl(U)}$. By Theorem~\ref{t:picture}, this means that the subalgebra of invariant polynomials on $W_0$ is spanned by $F^r(\varphi_\sigma)$ where $\varphi_\sigma$ are the picture invariants as defined above. Let $T_\sigma$ be the linear map described in Remark~\ref{r:linear}. Then we have,
\begin{proposition}\label{l:restitution2}
    The graded picture invariants $\varphi_\sigma\in S^{(m_1,\ldots,m_s)}(W^*)$ maps under restitution $F^r$ to the map $T_\sigma$ given by $T_\sigma(u_1\tensor u_1\tensor \cdots \textup{($m_1$-times)} \tensor\cdots\tensor u_s\tensor u_s\tensor\cdots  \textup{($m_s$-times)})$ where $\boldsymbol{u}=(u_1\ldots,u_s)\in W_0$.
\end{proposition}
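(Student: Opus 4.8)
The plan is to chase the graded picture invariant $\varphi_\sigma$ through the same chain of identifications used in the proof of Theorem~\ref{t:picture}, and then apply the restitution map $F^r$ by evaluating on a degree-$0$ vector $\boldsymbol{u}=(u_1,\ldots,u_s)\in W_0$. Concretely, $\varphi_\sigma\in S^{(m_1,\ldots,m_s)}(W^*)$ is, by construction in \S\ref{s:picture} together with the computation in the proof of Theorem~\ref{t:picture}, the image of the tensor invariant $\Theta(\sigma)\in\left(U^{\tensor N}\tensor(U^*)^{\tensor N}\right)^*$ under the composite of the isomorphisms \eqref{eq:3}, \eqref{eq:2}, \eqref{eq:1} followed by $\varpi_{m_1}\tensor\cdots\tensor\varpi_{m_s}$. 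By Remark~\ref{r:linear}, pulling $\Theta(\sigma)$ back along \eqref{eq:2} produces the linear map $T_\sigma$ on $\tensor_{i=1}^s(U_{b_i}^{t_i})^{\tensor m_i}$ obtained by applying the permutation $\nu.\tau.\hat\sigma.\mu$ and then the evaluation map. So the first step is simply to record that $\varphi_\sigma=(\varpi_{m_1}\tensor\cdots\tensor\varpi_{m_s})(\widetilde{T_\sigma})$ where $\widetilde{T_\sigma}\in\tensor_{i=1}^s((U_{b_i}^{t_i})^*)^{\tensor m_i}$ is the element corresponding to $T_\sigma$ under the duality \eqref{eq:1}.

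Next I would invoke Lemma~\ref{l:restitution} (second part) and Remark~\ref{r:restitution}: applying $F^r$ to $\varpi_{m_1}\tensor\cdots\tensor\varpi_{m_s}$ of an element $\boldsymbol{T}\in\tensor_{i=1}^sT^{m_i}((U_{b_i}^{t_i})^*)$ and evaluating at $\boldsymbol{u}\in W_0$ yields $ev(\boldsymbol{T}\tensor u_1\tensor u_1\tensor\cdots\text{($m_1$-times)}\tensor\cdots\tensor u_s\tensor u_s\tensor\cdots\text{($m_s$-times)})$, with no sign appearing because $\boldsymbol{u}$ is even. Feeding $\boldsymbol{T}=\widetilde{T_\sigma}$ into this formula gives exactly $F^r(\varphi_\sigma)(\boldsymbol{u})=ev\big(\widetilde{T_\sigma}\tensor u_1^{\tensor m_1}\tensor\cdots\tensor u_s^{\tensor m_s}\big)$. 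It then remains to identify this pairing with the value $T_\sigma(u_1\tensor u_1\tensor\cdots\text{($m_1$-times)}\tensor\cdots\tensor u_s\tensor u_s\tensor\cdots\text{($m_s$-times)})$: this is precisely the statement that $T_\sigma$, as an element of $\left(\tensor_{i=1}^s(U_{b_i}^{t_i})^{\tensor m_i}\right)^*$, corresponds to $\widetilde{T_\sigma}\in\tensor_{i=1}^s((U_{b_i}^{t_i})^*)^{\tensor m_i}$ under the nondegenerate pairing of \S\ref{iso}(6), i.e. the defining property of the duality \eqref{eq:1}. Since $T_\sigma$ was defined via that very duality in Remark~\ref{r:linear}, this identification is immediate, and the proposition follows.

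The main thing to be careful about — and what I expect to be the only real subtlety — is bookkeeping of the Koszul signs. There are three sign factors to reconcile: the sign $\gamma(\mu^{-1}.(L,U),\mu^{-1})$ coming from \eqref{eq:2}, the sign $(-1)^{p(\underline w,\underline w)}$ coming from the duality \eqref{eq:1}, and the signs $\gamma(L,\sigma^{-1})\gamma(\sigma L,\sigma L,\tau^{-1})\gamma(\tau.(\sigma L,\sigma L),\nu^{-1})$ appearing in the expansion \eqref{eq:3} of $\Theta(\sigma)$. One must check that their product is exactly the sign $(-1)^{p_\sigma(r_1,\ldots,r_N,m_1,\ldots,m_s)}=\gamma(I,\sigma^{-1})(-1)^{q(I,M,\sigma)}$ occurring in \eqref{eq:picture} — but this is precisely the content of the identity, already recorded in \S\ref{s:picture}, that $(-1)^{q(I,M,\sigma)}\gamma(I,\sigma^{-1})=\gamma(\mu^{-1}.(I,\sigma I),(\nu\tau\hat\sigma\mu)^{-1})$, using the cocycle relation $\gamma(\sigma.I,\tau^{-1})\gamma(I,\sigma^{-1})=\gamma(I,(\tau\sigma)^{-1})$. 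So the proof reduces to: (i) cite Remark~\ref{r:linear} to get $T_\sigma\leftrightarrow\widetilde{T_\sigma}$; (ii) cite Remark~\ref{r:restitution}/Lemma~\ref{l:restitution} to evaluate $F^r$ at an even point; (iii) match the surviving sign against the cocycle identity already established in \S\ref{s:picture}. The first two steps are formal; only step (iii) requires attention, and even there the hard combinatorial identity has been done upstream, so here it is just a matter of transcription.
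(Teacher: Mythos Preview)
Your proposal is correct and follows essentially the same approach as the paper: both use that $\varphi_\sigma$ is, by construction in Theorem~\ref{t:picture}, the image of $T_\sigma\in\left(\tensor_{i=1}^s(U_{b_i}^{t_i})^{\tensor m_i}\right)^*$ under the duality and symmetrisation maps, then invoke Remark~\ref{r:restitution} to evaluate $F^r$ at an even point, and finally identify the resulting $ev$-pairing with the value of $T_\sigma$. The one difference is that the paper packages this more abstractly via the isomorphism $\iota:T^r(W^*)\to(T^r(W))^*$ and a single commuting diagram, which lets it bypass your step~(iii) entirely: once $\iota(\boldsymbol{\varphi_\sigma})=T_\sigma$ is known from Theorem~\ref{t:picture}, the identity $ev(\boldsymbol{\varphi_\sigma}\tensor w)=\iota(\boldsymbol{\varphi_\sigma})(w)=T_\sigma(w)$ is the \emph{definition} of $\iota$, so no sign re-verification is needed here --- all that work was already absorbed into the proof of Theorem~\ref{t:picture}.
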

\begin{proof}
    The isomorphism  $\iota:T^r(W^*)\to (T^r(W))^*$ used in Theorem~\ref{t:picture} is obtained from the non-degenerate pairing $T^r(W^*)\tensor T^r(W)\to\Lambda$, as in (4) above. Therefore, for any $\phi\in T^r(W^*)$ we get a linear map $\iota(\phi)$ on $T^r(W)$ and the evaluation $\iota(\phi)(w_1\tensor\ldots\tensor w_r)$ is given by $ev(\phi\tensor w_1\tensor\ldots\tensor w_r)$. Hence we have the following commuting diagram:
 \[\xymatrix{\tensor_{i=1}^s((U_{b_i}^{t_i})^*)^{\tensor m_i}\ar@{^{(}->}[r]&T^r(W^*)\ar[r]&S^r(W^*)\ar[r]^{F^r}&\mathcal{F}(W_0,\Lambda)\\ \left(\tensor_{i=1}^s((U_{b_i}^{t_i})^{\tensor m_i}\right)^*\ar@{^{(}->}[r]\ar[u]&(T^r(W))^*\ar[u]^{\cong}\ar@{-->}[rru]
}\]where the inclusion $\tensor_{i=1}^s((U_{b_i}^{t_i})^*)^{\tensor m_i}\hookrightarrow T^r(W^*)$ is as earlier while the map $\left(\tensor_{i=1}^s((U_{b_i}^{t_i})^{\tensor m_i}\right)^*\hookrightarrow (T^r(W))^*$ is induced by the projection  $\textup{pr}: T^r(W)\twoheadrightarrow \left(\tensor_{i=1}^s((U_{b_i}^{t_i})^{\tensor m_i}\right)$;  for $\psi\in \left(\tensor_{i=1}^s((U_{b_i}^{t_i})^{\tensor m_i}\right)^* $ and $w_1\tensor \ldots \tensor w_r\in T^r(W)$, $\psi(w_1\tensor \ldots \tensor w_r):=\psi(\textup{pr}(w_1\tensor \ldots \tensor w_r))$. By the construction of $\varphi_\sigma$ in Theorem~\ref{t:picture}, we know that $T_\sigma\in \left(\tensor_{i=1}^s((U_{b_i}^{t_i})^{\tensor m_i}\right)^*$ maps to $\varphi_\sigma\in S^r(W^*)$. Let $\boldsymbol{\varphi_\sigma}\in T^r(W^*)$ such that $\iota(\boldsymbol{\varphi_\sigma})=T_\sigma$ and $\varpi_r(\boldsymbol{\varphi_\sigma})=\varphi_\sigma$. By Remark~\ref{r:restitution} we have $F^r(\varphi_\sigma)(\boldsymbol{u})=ev(\boldsymbol{\varphi_\sigma}\tensor u_1\tensor u_1\tensor \cdots \textup{($m_1$-times)} \tensor\cdots\tensor u_s\tensor u_s\tensor\cdots  \textup{($m_s$-times)})$ where $\boldsymbol{u}=(u_1\ldots,u_s)\in W_0$. Under the isomorphism $\iota$, as seen above, the latter evaluates to $T_\sigma( u_1\tensor u_1\tensor \cdots \textup{($m_1$-times)} \tensor\cdots\tensor u_s\tensor u_s\tensor\cdots  \textup{($m_s$-times)})$,  as required.
\end{proof}
By the above proposition, the invariant polynomials on $W_0$ are given by the evaluations of the linear maps $T_\sigma:=ev\circ \eta(\nu\tau\hat{\sigma}\mu)$ where $\eta:=\eta_{2N}$ is as in \S\ref{action}. In the spirit of \cite{dks}, we associate certain pictures to these invariants. To each variable $T(i)_{l_1\ldots l_{b_i}}^{u_1\ldots u_{t_i}} \in W^*$ we associate a picture\\
\begin{center}
    \begin{tikzpicture}
\draw node{$\vdots$} [->](0,-0.3) -- (1,-0.3);
\draw [->](0,0.4) -- (1,0.4);
\draw [->](0,0.6) -- (1,0.6);
\draw [->](0,0.2) -- (1,0.2);
\draw (1,-0.5) rectangle +(1,1.2);
\draw (1.5,0.2) node {T(i)};
\draw [->](2,-0.3) -- (3,-0.3);

\draw node{$\vdots$} [->](2,0.6) -- (3,0.6);
\draw [->](2,0.4) -- (3,0.4);
\draw [->](2,0.2) -- (3,0.2);
\end{tikzpicture}\\
\end{center}
\noindent
with $t_i$ in-arrows and $b_i$ out-arrows. Given an $s$-tuple $(m_1,\ldots, m_s)$ of non-negative integers such that $\sum_{i=1}^sm_it_i=\sum_{i=1}^sm_ib_i=N$, we take for each i, $m_i$ many copies of the pictures associated to $T(i)_{l_1\ldots l_{b_i}}^{u_1\ldots u_{t_i}}$ and arrange them in order from $i=1$ to $i=s$ and then label all the in-arrows and all the out-arrows from $1$ to $N$ in the order that they appear. Now given a $\sigma\in \Sym_N$ we associate a closed picture obtained by joining the in-arrow labelled $i$ with the out-arrow labelled $\sigma^{-1}(i)$. For example, let $W= V_2^1\dsum V_1^0\dsum V_0^1$ and $m_1=1; m_2=2;m_3=1$. Then corresponding to $\sigma=(1~2~3)$ we get the closed picture below:\\

\begin{tikzpicture}
\draw [->](0,0)--(1,0);
\draw [->](0,1.8)--(1,1.8);
\draw [->](2.2,1.8)--(3,1.8);
\draw [->](2.2,1)--(3,1);
\draw [->](5.2,1)--(6,1);
\draw [->](0,2)--(1,2);
\draw (0.8,0.2) node(i3) {3} (0.8,1.6) node(i2) {2} (0.8, 2.2) node(i1) {1} (2.4,2) node(o1) {1} (2.4,1.2) node(o2) {2} (5.4,1.2) node(o3) {3};
\draw[rounded corners=10pt] (0,0)--(0,0.4)--(3,0.4)--(3,1);
\draw[rounded corners=10pt] (3,1.8)--(3,1.4)--(0,1.4)--(0,1.8);
\draw[rounded corners=10pt] (6,1)--(6,2.4)--(0,2.4)--(0,2);
    \draw (1.6,-0.2) node[inner sep=5pt, draw]{T(3)}
    (1.6,0.8) node[inner sep=5pt, draw]{T(2)}
    (4.6,0.8) node[inner sep=5pt, draw]{T(2)}
    (1.6,1.8) node[inner sep=5pt, draw]{T(1)};
\end{tikzpicture}\\
\noindent
This picture corresponds to the graded picture invariant $\varphi_\sigma$ as defined above.
\subsubsection{Graded picture invariants in terms of supertraces}
We now restrict to the case when $W=\dsum_s (U_1^1)$. Under the identification of (1) above, $U_1^1\iso End_\Lambda(U)$. We can then consider the supertrace function of $U_1^1$ given by $str(v\tensor\alpha)=(-1)^{|v||\alpha|}\alpha(v)$ and $(v\tensor \alpha). (w\tensor \beta)=v\alpha(w)\tensor\beta$. With this notation, one may define the trace monomial $tr_\sigma$ for a permutation $\sigma\in S_N$ as $tr_\sigma(v_{1}\tensor \phi_1\tensor\cdots \tensor v_{N}\tensor \phi_{N})=str(v_{{i_1}}\tensor \phi_{{i_1}}.v_{{i_2}}\tensor \phi_{{i_2}}.\cdots .v_{{i_r}}\tensor \phi_{{i_r}})str(v_{{j_1}}\tensor \phi_{{j_1}}.v_{{j_2}}\tensor \phi_{{j_2}}.\cdots \tensor v_{{j_t}}\tensor \phi_{{j_t}})$
where $\sigma^{-1}=(i_1 ~i_2~ \ldots i_r)(j_1~j_2\cdots j_t)\cdots$. This definition is dependent on the permutation $\sigma$ and its cycle decomposition as well.  We show that 
\begin{lemma}\label{l:trace} (see \cite[Lemma~2.2]{Berele})
For a $\sigma\in \Sym_N$ such that $\sigma^{-1}=(i_1 ~i_2~ \ldots i_r)(j_1~j_2\cdots j_t)\cdots\in \Sym_N$, the $\gl(U)$-invariant map $ev\circ \eta(\nu.\tau.\hat{\sigma}.\tau^{-1})$ (as in Remark~\emph{\ref{r:linear}}) corresponds to the trace monomials $tr_\sigma$ upto a sign. Further, both the maps agree when restricted to the degree $0$ part, $((U_1^1)_0)^{\tensor N}$.
\end{lemma}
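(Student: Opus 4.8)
The plan is to evaluate both maps on a pure tensor, rewrite each of them in the form $\prod_{j}\phi_j(v_{\sigma^{-1}(j)})$ with an explicit sign, and reduce the comparison of signs to the bookkeeping of \cite[Lemma~2.2]{Berele}. Since $ev\circ\eta(\nu\tau\hat\sigma\tau^{-1})$ and $tr_\sigma$ are both $\Lambda$-multilinear on $(U_1^1)^{\otimes N}$, it suffices to compare them on $\underline A=A_1\otimes\cdots\otimes A_N$ with each $A_i=v_i\otimes\phi_i$ a $\Z_2$-homogeneous element of $U_1^1=U\otimes U^*$. First I would note that, identifying $(U_1^1)^{\otimes N}$ with $(U\otimes U^*)^{\otimes N}\subset (U\oplus U^*)^{\otimes 2N}$, the de-interleaving permutation $\tau^{-1}$ carries $\underline A$ into $U^{\otimes N}\otimes(U^*)^{\otimes N}$, and that $ev\circ\eta(\nu\tau\hat\sigma\tau^{-1})=\Theta(\sigma)\circ\eta(\tau^{-1})$, with $\Theta$ as in \S\ref{iso}(7) and $\sigma$ regarded in $\Endo_\Lambda(U^{\otimes N})$; indeed $\eta(\hat\sigma)(\underline v\otimes\underline f)=\eta_N(\sigma)(\underline v)\otimes\underline f$ because all inversions of $\hat\sigma$ lie in $\{1,\ldots,N\}$, so that $\Theta(\sigma)=ev\circ\eta(\nu\tau\hat\sigma)$ on $U^{\otimes N}\otimes(U^*)^{\otimes N}$, where $ev=\mathrm{ev}^{\otimes N}$ carries no sign.

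Next come the two explicit computations. Tracking the Koszul signs through $\tau^{-1}$, $\hat\sigma$, $\tau$, $\nu$ by repeated use of the cocycle identity $\gamma(\rho.\underline w,(\rho')^{-1})\gamma(\underline w,\rho^{-1})=\gamma(\underline w,(\rho'\rho)^{-1})$ of \S\ref{action}, one checks that $\rho:=\nu\tau\hat\sigma\tau^{-1}$ is the permutation of $\{1,\ldots,2N\}$ with $\rho(2i-1)=2\sigma(i)$ and $\rho(2i)=2i-1$, and that, writing $\underline w:=v_1\otimes\phi_1\otimes\cdots\otimes v_N\otimes\phi_N$,
\[
ev\circ\eta(\nu\tau\hat\sigma\tau^{-1})(\underline A)=\gamma(\underline w,\rho^{-1})\prod_{j=1}^{N}\phi_j\big(v_{\sigma^{-1}(j)}\big).
\]
On the other side, writing $\sigma^{-1}$ as a product of disjoint cycles $(i^{(\ell)}_1\ \cdots\ i^{(\ell)}_{r_\ell})$ and using $(v\otimes\phi)(w\otimes\psi)=v\otimes\phi(w)\psi$ together with $str(v\otimes\phi)=(-1)^{|v||\phi|}\phi(v)$, each supertrace factor of $tr_\sigma(\underline A)$ equals $\prod_k\phi_{i_k}(v_{\sigma^{-1}(i_k)})$ up to an explicit sign in the parities $|v_{i_k}|,|\phi_{i_k}|$; hence $tr_\sigma(\underline A)=\varepsilon(\underline A,\sigma)\prod_{j=1}^N\phi_j(v_{\sigma^{-1}(j)})$, and the two maps agree up to the sign $\gamma(\underline w,\rho^{-1})\,\varepsilon(\underline A,\sigma)^{-1}$.

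It then remains to control this ratio. For the first assertion I would show it depends only on $\sigma$ (and on the chosen ordering of the cycles of $\sigma^{-1}$). For the statement on $((U_1^1)_0)^{\otimes N}$ it is convenient to reduce to a single cycle: when every $A_i$ is even, the block of tensor positions belonging to a cycle of $\sigma^{-1}$ has even total degree, so distinct cycle-blocks commute past one another without sign and both $tr_\sigma$ and $\Theta(\sigma)$ factor as products over the cycles of $\sigma^{-1}$; one is then left to verify $\gamma(\underline w,\rho^{-1})=\varepsilon(\underline A,\sigma)$ for a single $r$-cycle with all $A_i$ even, where both sides reduce to the same power of $-1$ in the common parities $|v_i|=|\phi_i|$. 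The main obstacle is exactly this identity: matching the product of Koszul signs $\gamma(\underline w,\rho^{-1})$, which is a product over the inversions of $\sigma^{-1}$, against the sign produced by the products in $\Endo_\Lambda(U)$ and the supertraces. This is the computation of \cite[Lemma~2.2]{Berele} transcribed into the braided category of $\Z_2$-graded $\Lambda$-modules, and the degree-$0$ restriction is where the two sign conventions become compatible, making it the natural place to carry it out before bootstrapping to the general up-to-sign statement.
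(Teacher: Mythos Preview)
Your setup matches the paper's: both evaluate $ev\circ\eta(\nu\tau\hat\sigma\tau^{-1})$ and $tr_\sigma$ on a pure tensor $A_1\otimes\cdots\otimes A_N$ with $A_i=v_i\otimes\phi_i$ homogeneous, and both reduce each side to $\pm\prod_j\phi_j(v_{\sigma^{-1}(j)})$. That already gives the ``up to a sign'' statement; note however that your claim that the ratio $\gamma(\underline w,\rho^{-1})\,\varepsilon(\underline A,\sigma)^{-1}$ ``depends only on $\sigma$'' is stronger than what is needed and is not obviously true in general (the sign can depend on the parities $|v_i|,|\phi_i|$). The paper does not assert this; it treats ``up to a sign'' in the weak sense and puts all the work into the degree-$0$ agreement.

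For the degree-$0$ part the two arguments genuinely diverge. You propose to use that each $A_i$ is even to factor both maps over the cycles of $\sigma^{-1}$ (since $\rho=\nu\tau\hat\sigma\tau^{-1}$ permutes the position-blocks $\{2i-1,2i: i\in C\}$ for each cycle $C$, and even blocks commute without sign), and then check the single-cycle case directly. The paper instead first treats the special contiguous case $\sigma^{-1}=(1\,2\cdots r)(r{+}1\cdots t)\cdots$ by an explicit sign computation, and then reduces an arbitrary $\sigma$ to this form via the conjugation trick of \cite[Lemma~2.2]{Berele}: writing $\pi\sigma^{-1}\pi^{-1}$ in the contiguous form and verifying the identity $\tau(\pi^{-1},\pi^{-1})\tau^{-1}\cdot\tau(\widehat{\pi\sigma\pi^{-1}})^{-1}\tau^{-1}\nu=\tau\hat\sigma^{-1}\tau^{-1}\nu\cdot\pi^*$, together with the invariance of $ev$ under $\pi^*$ on vectors of the form $v_1\otimes\phi_1\otimes\cdots$. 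Your factorisation-over-cycles approach is a legitimate and arguably cleaner alternative, but you should make the block-decomposition of $\rho$ explicit and actually carry out the single-cycle sign check (the paper's contiguous-case computation shows this sign is $(-1)^{a_{i_1}}$ per cycle when $a_i=|v_i|=|\phi_i|$, matching the $(-1)^{|v||\phi|}$ in $str$), rather than deferring it entirely to \cite{Berele}.
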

\begin{proof}
For the first part, by $\Lambda$-linearity it suffices to prove the above for the basis elements $(e_{l_1}\tensor e^*_{u_1}\tensor \cdots e_{l_N}\tensor e^*_{u_N})$. 
The effect of applying $\nu.\tau.\hat{\sigma}.\tau^{-1}$ to $e_{l_1}\tensor e^*_{u_1}\tensor \cdots e_{l_N}\tensor e^*_{u_N}$ from the left is to get $\pm (e^*_{u_1}\tensor e_{l_{\sigma^{-1}(1)}}\tensor \cdots e^*_{u_N}\tensor e_{u_{\sigma^{-1}(N)}})$. Now applying the evaluation map $ev$ on this vector gives, $\pm e^*_{u_1}(e_{l_{\sigma^{-1}(1)}})\cdots e^*_{u_{r}}(v_{l_{\sigma^{-1}(r)}})e^*_{u_{r+1}}(e_{l_{\sigma^{-1}(r+1)}})\cdots$.
After a rearrangement, the latter is equal to $e^*_{u_{i_1}}(e_{l_{i_2}})\cdots e^*_{u_{i_r}}(e_{l_{i_1}})e^*_{u_{j_{1}}}(e_{l_{j_{2}}})\cdots$, upto a sign. This in turn is the trace monomial $tr_\sigma(e_{l_1}\tensor e^*_{u_1}\tensor \cdots e_{l_N}\tensor e^*_{u_N})$.

For the second part, we try to determine the sign change explicitly. First, we consider the special case of $\sigma^{-1}=(1~2~\cdots r)(r+1 \cdots t)\cdots$. Let $v_i\tensor\phi_i\in V\tensor V^*$. Let $v_{i}$ have parity $a_i$ then $\phi_{i}$ have parity $a'_i$. The effect of applying $\nu.\tau.\hat{\sigma}.\tau^{-1}$ to $v_{1}\tensor \phi_{1}\tensor\cdots \tensor v_{N}\tensor \phi_{N}$ on the left is to get $\pm  (\phi_{1}\tensor v_{\sigma^{-1}(1)}\tensor \phi_N\tensor v_{\sigma^{-1}(N)})$. We consider the special case of $\sigma^{-1}=(1~2~\cdots r)(r+1 \cdots t)\cdots$. In this case, acting $\nu.\tau.\hat{\sigma}.\tau^{-1}$ from the left on $(v_{1}\tensor \phi_{1}\tensor \cdots v_{N}\tensor \phi_{N})$ we get $\pm( \phi_{{1}}\tensor v_{{2}}\tensor \phi_{{2}}\tensor v_{3}\cdots \tensor \phi_{{r}}\tensor v_{{1}})\tensor (\phi_{r+1}\tensor v_{{r+2}}\tensor\cdots\tensor \phi_{{t}} \tensor v_{{r+1}})\tensor\cdots$.  Now applying the evaluation map $ev$ on this vector gives, $\pm \phi_{1}(v_{2})\cdots \phi_{r}(v_{1})\phi_{r+1}(v_{{r+2}})\cdots$, leaving the sign unchanged. The total sign change is then given by $a_1(a'_1+a_2 +a'_2+\cdots +a_{r}+a'_{r})+a_{r+1}(a'_{r+1}+a_{r+2}+\cdots+a'_t)+\cdots$. If $v_i\tensor \phi_i$ are of degree $0$ then $a_i=a'_i$ for each $i$. So, for these values we see that the sign is $(-1)^{a_1+a_{r+1}+\cdots}$. This agrees with the value of the trace monomial $tr_\sigma$ evaluated on $(v_{l_1}\tensor \phi_{u_1}\tensor \cdots v_{l_N}\tensor \phi_{u_N})$. Thus, in the case that $\sigma^{-1} =(1~2~\cdots r)(r+1 \cdots t)\cdots$ the linear maps $ev\circ \eta(\nu.\tau.\hat{\sigma}.\tau^{-1})$ and $tr_\sigma$ agree on  $\left((U_1^1)_0\right)^{\tensor N}$. The proof of the general case is same as in \cite[Lemma2.2]{Berele}, which we recall for completeness: 
Let $\sigma^{-1}=(i_1 ~i_2~ \ldots i_r)(i_{r+1}~i_{r+2}\cdots i_t)\cdots\in \Sym_N$. Let $\pi\in\Sym_N$ such that $\pi(j)=i_j$ then $\pi\sigma^{-1}\pi^{-1}=(1~2~\cdots r)(r+1\cdots t)\cdots$. We have, $tr_{\pi\sigma\pi^{-1}}(v_{\pi(1)}\tensor \phi_{\pi(1)}\tensor\cdots\tensor v_{\pi(N)}\tensor\phi_{\pi(N)}):=str(v_{\pi(1)}\tensor \phi_{\pi(1)}\tensor \cdots\tensor v_{\pi(r)}\tensor\phi_{\pi(r)})str(v_{\pi(r+1)}\tensor \phi_{\pi(r+1)}\tensor \cdots\tensor v_{\pi(t)}\tensor\phi_{\pi(t)})\cdots=tr_{\sigma}(v_{1}\tensor \phi_{1}\tensor \cdots v_{N}\tensor \phi_{N})$. By the special case proved earlier, we have $tr_{\pi\sigma\pi^{-1}}(v_{\pi(1)}\tensor \phi_{\pi(1)}\tensor\cdots\tensor v_{\pi(N)}\tensor\phi_{\pi(N)})=(ev\circ \eta(\nu\tau(\hat{\pi\sigma\pi^{-1}})\tau^{-1}))(v_{\pi(1)}\tensor \phi_{\pi(1)}\tensor\cdots\tensor v_{\pi(N)}\tensor\phi_{\pi(N)})=ev( (v_{\pi(1)}\tensor \phi_{\pi(1)}\tensor\cdots\tensor v_{\pi(N)}\tensor\phi_{\pi(N)})\tau.(\hat{\pi\sigma\pi^{-1}})^{-1}.\tau^{-1}.\nu)=ev( (v_{1}\tensor \phi_{1}\tensor\cdots\tensor v_{N}\tensor\phi_{N}).\pi^*.\tau(\hat{\pi\sigma\pi^{-1}})^{-1}.\tau^{-1}.\nu)$ where $\pi^*=\tau (\pi^{-1},\pi^{-1})\tau^{-1}$. We have \begin{align*}\tau (\pi^{-1},\pi^{-1})\tau^{-1}\tau(\Hat{\pi\sigma\pi^{-1}})^{-1}\tau^{-1}.\nu&=\tau (\pi^{-1},\pi^{-1})(\pi\sigma^{-1}\pi^{-1},1)\tau^{-1}.\nu\\ 
&=\tau(\sigma ^{-1}\pi^{-1},\pi^{-1})\tau^{-1}\nu\\ 
&=\tau\hat{\sigma}^{-1}(\pi^{-1},\pi^{-1})\tau^{-1}\nu\\ 
&=\tau\hat{\sigma}^{-1}\tau^{-1}\tau(\pi^{-1},\pi^{-1})\tau^{-1}\nu\\
&=\tau\hat{\sigma}^{-1}\tau^{-1}\pi^*\nu\\ &=\tau\hat{\sigma}^{-1}\tau^{-1}\nu\pi^*. 
\end{align*} The last equality comes from noting that $\pi^*\nu$ and $\nu\pi^*$ agree on vectors of the form $v_{1}\tensor \phi_{1}\tensor\cdots \tensor v_{N}\tensor \phi_{N}$. Also, $ev([v_{1}\tensor \phi_{1}\tensor\cdots \tensor v_{N}\tensor \phi_{N}]\pi^*)=ev(v_{1}\tensor \phi_{1}\tensor\cdots \tensor v_{N}\tensor \phi_{N})$ on $(V\tensor V^*)^{\otimes N}$. Putting all the above observations together, we have $tr_\sigma(v_{1}\tensor \phi_{1}\tensor\cdots \tensor v_{N}\tensor \phi_{N})=(ev\circ\eta(\tau\hat{\sigma}\tau^{-1}\nu))(v_{1}\tensor \phi_{1}\tensor\cdots \tensor v_{N}\tensor \phi_{N}))$ for all $\sigma\in \Sym_N$, proving the second part.
\end{proof}
It is shown in \cite[Lemma~6.6]{Berele2} that in the special case of $W=(U_1^1)^d$ and when $m=n$, that $\mathcal{P}^{(1,\ldots,1)}(W_0)$ is actually the space of multilinear polynomials in the co-ordinates of $M_{2n}(\Lambda)_{0}=M_{n,n}$. The following corollary may be viewed as a generalization of this result.
\begin{corollary}\label{c:traces}
    The picture invariants $\varphi_\sigma\in S^{(m_1,\ldots,m_s)}(W^*)$ maps under restitution $F^r$ to the trace monomial $T_\sigma$ given by $T_\sigma(\boldsymbol{u})=tr_\sigma(u_1\tensor u_1\tensor \cdots \textup{($m_1$-times)} \tensor\cdots\tensor u_s\tensor u_s\tensor\cdots  \textup{($m_s$-times)})$ where $\boldsymbol{u}=(u_1\ldots,u_s)\in W_0$.
\end{corollary}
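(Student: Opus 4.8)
The corollary is essentially Proposition~\ref{l:restitution2} specialized to $W=\dsum_s(U_1^1)$ and then refined by Lemma~\ref{l:trace}. So the strategy is a two-step chain. First I would invoke Proposition~\ref{l:restitution2}: it already tells us that $F^r(\varphi_\sigma)$ is the map $T_\sigma$ defined by $T_\sigma(\boldsymbol u)=T_\sigma\big(u_1\tensor u_1\tensor\cdots(\text{$m_1$-times})\tensor\cdots\tensor u_s\tensor u_s\tensor\cdots(\text{$m_s$-times})\big)$, where the linear map $T_\sigma$ on the tensor space is the composite $ev\circ\eta(\nu\tau\hat\sigma\mu)$ described in Remark~\ref{r:linear}. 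Then I would note that in the present case $W=\dsum_sU_1^1$, so $t_i=b_i=1$ for all $i$ and, as recorded in Remark~\ref{r:linear}, the permutation $\mu$ is just $\tau$. Hence the relevant linear map is $ev\circ\eta(\nu\tau\hat\sigma\tau^{-1})$, which is exactly the map appearing in Lemma~\ref{l:trace}.

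Second, I would apply Lemma~\ref{l:trace}, whose second assertion says precisely that $ev\circ\eta(\nu\tau\hat\sigma\tau^{-1})$ agrees with the trace monomial $tr_\sigma$ when restricted to the degree-$0$ part $((U_1^1)_0)^{\tensor N}$. Since $\boldsymbol u=(u_1,\ldots,u_s)\in W_0$ means each $u_i\in(U_1^1)_0$, the input tensor $u_1\tensor u_1\tensor\cdots\tensor u_s\tensor u_s\tensor\cdots$ lies in $((U_1^1)_0)^{\tensor N}$, where $N=\sum_i m_i$ (here $b_i=t_i=1$, so the constraint $\sum m_ib_i=\sum m_it_i=N$ just says $N=\sum_i m_i$). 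Therefore on this input the linear map $T_\sigma$ coincides with $tr_\sigma$, and substituting into the formula from Proposition~\ref{l:restitution2} gives $F^r(\varphi_\sigma)(\boldsymbol u)=tr_\sigma\big(u_1\tensor u_1\tensor\cdots(\text{$m_1$-times})\tensor\cdots\tensor u_s\tensor u_s\tensor\cdots(\text{$m_s$-times})\big)$, which is the claimed identity.

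There is essentially no obstacle here beyond bookkeeping: the corollary is a direct concatenation of two results already proved. The one point that deserves a careful sentence is the verification that $\mu=\tau$ when all $t_i=b_i=1$, so that Lemma~\ref{l:trace} (stated for $ev\circ\eta(\nu\tau\hat\sigma\tau^{-1})$) is applicable to the map $T_\sigma=ev\circ\eta(\nu\tau\hat\sigma\mu)$ coming from Proposition~\ref{l:restitution2}; this is exactly the content of the last sentence of Remark~\ref{r:linear}, so I would simply cite it. The proof is therefore short:

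\begin{proof}
When $W=\dsum_s(U_1^1)$ we have $t_i=b_i=1$ for all $i$, so the constraint $\sum_i m_it_i=\sum_i m_ib_i=N$ reads $N=\sum_{i=1}^sm_i$, and by Remark~\ref{r:linear} the permutation $\mu$ coincides with $\tau$. Hence the linear map $T_\sigma$ of Remark~\ref{r:linear} and Proposition~\ref{l:restitution2} is $ev\circ\eta(\nu\tau\hat\sigma\tau^{-1})$. By Proposition~\ref{l:restitution2}, $F^r(\varphi_\sigma)(\boldsymbol u)=T_\sigma\big(u_1\tensor u_1\tensor\cdots\textup{($m_1$-times)}\tensor\cdots\tensor u_s\tensor u_s\tensor\cdots\textup{($m_s$-times)}\big)$ for $\boldsymbol u=(u_1,\ldots,u_s)\in W_0$. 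Since each $u_i\in(U_1^1)_0$, the argument of $T_\sigma$ lies in $\left((U_1^1)_0\right)^{\tensor N}$, and by the second part of Lemma~\ref{l:trace} the map $ev\circ\eta(\nu\tau\hat\sigma\tau^{-1})$ agrees with the trace monomial $tr_\sigma$ on $\left((U_1^1)_0\right)^{\tensor N}$. Therefore $F^r(\varphi_\sigma)(\boldsymbol u)=tr_\sigma\big(u_1\tensor u_1\tensor\cdots\textup{($m_1$-times)}\tensor\cdots\tensor u_s\tensor u_s\tensor\cdots\textup{($m_s$-times)}\big)$, as claimed.
\end{proof}
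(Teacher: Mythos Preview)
Your proof is correct and follows exactly the paper's approach: the paper's own proof is the one-line sentence ``The proof follows from Proposition~\ref{l:restitution2} and the above lemma,'' and you have simply unpacked this chain with the intermediate observation from Remark~\ref{r:linear}. One small slip: Remark~\ref{r:linear} says that $\mu^{-1}$ is the permutation $\tau$ (equivalently $\mu=\tau^{-1}$), not that $\mu=\tau$; your subsequent formula $ev\circ\eta(\nu\tau\hat\sigma\tau^{-1})$ is nonetheless the right one, so the argument goes through unchanged once that sentence is corrected.
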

\begin{proof}
The proof follows from Proposition~\ref{l:restitution2} and the above lemma.
\end{proof}
 The invariants in $P(W_0)$ for induced action of $\gl(U)$ such that the isomorphism is $\gl(U)$-equivariant are called the invariant polynomials on $W_0$. The following is Theorem~6.7 of \cite{Berele2},
\newcommand{\str}{\textup{str}}
\begin{theorem}\label{t:trace}
    The invariant polynomials for the simultaneous action of $\gl(U)$ on $\oplus_{i=1}^s(U_1^1)_0$ is spanned by the trace monomials $\tr_\sigma$ given by \[\tr_\sigma(A_1,\ldots, A_s):= \str(A_{f(i_1)}\cdots A_{f(i_r)})\str(A_{f(i_{r+1})}\cdots A_{f(i_t)})\cdots\] where $A_1,\ldots, A_s\in\oplus_{i=1}^sU_1^1$, $\sigma=(i_1 \ldots i_r)(i_{r+1} \ldots i_t)\cdots\in \Sym_n$ and a map $f:\{1,\ldots, n\}\to \{1,\ldots, s\}$.
\end{theorem}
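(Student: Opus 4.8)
The plan is to deduce Theorem~\ref{t:trace} from the combination of Theorem~\ref{t:picture}, Proposition~\ref{l:restitution2}, Lemma~\ref{l:trace}, and Corollary~\ref{c:traces}, specializing to the case $W=\dsum_{i=1}^s U_1^1$. First I would recall that the invariant polynomials on $W_0=\dsum_{i=1}^s(U_1^1)_0$ are, by definition, the image under the restitution map $F^r$ of the invariant subalgebra $S(W^*)^{\gl(U)}$. By Theorem~\ref{t:picture}, this invariant subalgebra is $\Lambda$-linearly spanned by the graded picture invariants $\varphi_\sigma$ associated to tuples $(m_1,\ldots,m_s)$ with $\sum_k m_k=\sum_k m_kt_k=\sum_k m_kb_k=N$ (here $t_i=b_i=1$, so the condition is simply that $\sum_k m_k=N$) and permutations $\sigma\in\Sym_N$. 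Applying $F^r$ and invoking Corollary~\ref{c:traces}, each $\varphi_\sigma$ maps to the function $\boldsymbol{u}=(u_1,\ldots,u_s)\mapsto \tr_\sigma(u_1\tensor u_1\tensor\cdots\,(m_1\text{-times})\,\tensor\cdots\tensor u_s\tensor u_s\tensor\cdots\,(m_s\text{-times}))$ on $W_0$.

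The next step is purely bookkeeping: to identify this evaluation of a trace monomial on a tensor with $m_i$ repeated copies of each $u_i$ with the expression $\str(A_{f(i_1)}\cdots A_{f(i_r)})\str(A_{f(i_{r+1})}\cdots A_{f(i_t)})\cdots$ appearing in the statement. Writing $\sigma^{-1}=(i_1\,i_2\cdots i_r)(i_{r+1}\cdots i_t)\cdots$, the definition of $tr_\sigma$ on $(U_1^1)^{\tensor N}\iso\End_\Lambda(U)^{\tensor N}$ gives $tr_\sigma(B_1\tensor\cdots\tensor B_N)=\str(B_{i_1}B_{i_2}\cdots B_{i_r})\str(B_{i_{r+1}}\cdots B_{i_t})\cdots$. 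When we substitute the tensor in which the first $m_1$ slots are all $u_1=A_1$, the next $m_2$ slots are all $u_2=A_2$, and so on, each $B_{i_k}$ becomes $A_{f(i_k)}$ where $f:\{1,\ldots,N\}\to\{1,\ldots,s\}$ is the nondecreasing surjection sending the first $m_1$ indices to $1$, the next $m_2$ to $2$, etc. Conversely, every map $f:\{1,\ldots,N\}\to\{1,\ldots,s\}$ arises this way by reordering the cycle entries; since the supertrace is invariant under cyclic permutation and the $m_i$ are recovered as the fibre sizes of $f$, we obtain precisely the family of trace monomials $\tr_\sigma(A_1,\ldots,A_s)$ in the statement. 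This shows that the spanning set of Corollary~\ref{c:traces} coincides with the supertrace monomials of Theorem~\ref{t:trace}, and since $F^r$ is an isomorphism onto $\mathcal{P}^r(W_0)$ by Lemma~\ref{l:restitution}, the images span the invariant polynomials.

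The only genuinely delicate point is making sure that the restriction to $W_0$ is legitimate and that no spurious sign survives: Lemma~\ref{l:trace} is invoked precisely to guarantee that the $\gl(U)$-invariant linear map $ev\circ\eta(\nu\tau\hat{\sigma}\mu)$ — which a priori only agrees with $tr_\sigma$ up to a sign on all of $(U_1^1)^{\tensor N}$ — agrees with $tr_\sigma$ \emph{on the nose} when restricted to $((U_1^1)_0)^{\tensor N}$, and it is exactly these degree-zero inputs that occur when we feed in elements of $W_0$. I expect this sign-tracking — confirming that the parity-dependent sign in Lemma~\ref{l:trace} collapses to the expected value because $a_i=a_i'$ for degree-zero $v_i\tensor\phi_i$ — to be the main (though already discharged) obstacle; the rest is a direct assembly of the cited results. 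I would close by remarking that this recovers \cite[Theorem~6.7]{Berele2} as the special case, and that the generation as an algebra (not merely spanning as a $\Lambda$-module) follows since products of trace monomials are again trace monomials associated to disjoint unions of permutations, so the span is already a $\Lambda$-subalgebra.
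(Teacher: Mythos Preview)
Your proposal is correct and follows essentially the same approach as the paper: invoke Theorem~\ref{t:picture} to span $S(W^*)^{\gl(U)}$ by the $\varphi_\sigma$, then apply Corollary~\ref{c:traces} (which already packages Proposition~\ref{l:restitution2} and Lemma~\ref{l:trace}) to identify $F^r(\varphi_\sigma)$ with the trace monomials. Your exposition is more detailed than the paper's two-line proof, in particular making explicit the translation between the ``$m_i$ repeated copies'' formulation of Corollary~\ref{c:traces} and the ``map $f$'' formulation in the statement, as well as the sign consideration from Lemma~\ref{l:trace}, both of which the paper leaves implicit.
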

\begin{proof}
    The invariants in $\mathcal{P}(W_0)$ is the image of $S(W^*)^{\gl(U)}$ which in turn is spanned by $\varphi_\sigma$, by Theorem~\ref{t:picture}. By the above Corollary~\ref{c:traces} we then get the required result.
\end{proof}

\section{Schur-Weyl-Sergeev Duality for queer superalgebra}

\subsection{Queer superalgebra} Let $V=V_0 \oplus V_1$ be the $\mathbb Z_2$-graded superspace for which $V_0=\mathbb C^n$ and $V_1=\mathbb C^n$. 
Let $\{v_1,\cdots ,v_n\}$ and $v_{-1}, \cdots ,v_{-n}\}$ be bases for $V_0$ and $V_1$ respectively. 

Define a map $P:V \rightarrow V$ by $v_i \mapsto (-1)^{|v_i|}v_{-i}$ for all $i$. 

Recall that the superbracket in $End_{\mathbb C}(V)$ is given by $$[f,g]=fg-(-1)^{|f||g|}gf.$$ for homogeneous elements $f,g \in End_{\mathbb C}(V)$.

Let $U:=V \otimes_{\mathbb C} \Lambda$. The map $P \in End_{\mathbb C}(V)$ extends to a map in $End_{\Lambda}(U)$ which we still denote by $P$. We have $P^2=-I$. 

We define $\mathfrak q(U):=\{f \in End_{\Lambda}(U)_0: [f,P]=0\}$. It is closed under the superbracket and we call it the queer superalgebra.

With respect to the basis $B:=B_0 \cup B_1$, the operator $P$ can be written in matrix form as $$P=\begin{pmatrix}
0 & -I \\
I & 0
\end{pmatrix}$$ and the queer Lie superalgebra $\mathfrak q(U)$ (or $\mathfrak q(n)$) can be expressed in the matrix form $$\mathfrak q(n)=\{\begin{pmatrix}
A & B \\
B & A
\end{pmatrix}: A \in M_n(\Lambda_0), B \in M_n(\Lambda_1)\}.$$

We define the queer trace of an element of 
$\mathfrak q(n)$ to be $$qtr(\begin{pmatrix}
A & B \\
B & A
\end{pmatrix})=[tr(A+B)]_1,$$ where $[tr(A+B)]_1$ is the degree $1$ component of the sum of the diagonal entries of $A+B$. We can see that $qtr$ satisfy $qtr(MN)=qtr(NM)$ and $qtr(RMR^{-1})=qtr(M)$ for all $M,N,R \in \mathfrak q(n)$, where $R$ is invertible.


The queer superalgebra $\mathfrak q(U)$ is a subalgebra of $\mathfrak{gl}(U)$ and it acts on $U^{\otimes k}$ by  ordinary derivation (see \S \ref{action}).

The queer supergroup $Q(U)$ is the subsupergroup of $GL(U)$ which commute with $P$. The diagonal action of $GL(U)$ on $U^{\otimes k}$ restricts to an action of $Q(U)$ on $U^{\otimes k}$. 

\subsection{The Sergeev superalgebras}
Let $S_k$ be the symmetric group on $k$ letters. It is generated by the transpositions $s_1,\cdots ,s_{k-1}$. 

The Sergeev superalgebra $Ser_k$ is the associative superalgebra generated by $s_1,\cdots ,s_{k-1}$ and $c_1,\cdots ,c_{k-1},c_k$ with the following defining relations:

$s_i^2=1, s_is_{i+1}s_i=s_{i+1}s_is_{i+1}, s_is_j=s_js_i \,\,\,\, (|i-j|>1),$

$c_i^2=-1, c_ic_j=-c_jc_i, \,\, (i \neq j)$

$s_ic_is_i=c_{i+1}, s_ic_j=c_js_i, \,\, j \neq i,i+1$.

The generators $s_1,\cdots ,s_{k-1}$ are regarded as even and the subalgebra generated by them is isomorphic to the group algebra $\mathbb C[S_k]$ of $S_k$; the generators $c_1,\cdots ,c_{k-1},c_k$ are called odd and the $\mathbb C$-subalgebra generated by them is isomorphic to the Clifford superalgebra $\mathbf Cl_k$. 

The Sergeev superalgebra $Ser_k$ is isomorphic to the superalgebra $\mathbb C[S_k] \ltimes \mathbf Cl_k$, which is $\mathbb C[S_k] \otimes \mathbf Cl_k$ as a superspace with the multiplication given by

$$(\sigma \otimes c_{i_1}\cdots c_{i_l})(\tau \otimes c_{j_1}\cdots c_{j_m})=\sigma \tau \otimes c_{\tau^{-1}(i_1)}\cdots c_{\tau^{-1}(i_l)} c_{j_1}\cdots c_{j_m},$$ where $1 \leq i_s, j_t \leq k$.

Let $\Lambda Ser_k:=Ser_k \otimes_{\mathbb C} \Lambda$. There is a left action of the superalgebra $\Lambda Ser_k$ on $U^{\otimes k}$ which is given as follows (see \S 3.4.1 of \cite{CW}):  

The generators $s_j$ acts on $U^{\otimes k}$ by $$s_j.(u_1 \otimes \cdots \otimes u_k)=(-1)^{|u_j||u_{j+1}|}u_1 \otimes \cdots \otimes u_{j-1} \otimes u_{j+1} \otimes u_j \otimes u_{j+2} \otimes \cdots \otimes u_k.$$

The generators $c_j$ acts on $U^{\otimes k}$ by $$c_j.(u_1 \otimes \cdots \otimes u_k)=(-1)^{|u_1|+\cdots +|u_{j-1}|}u_1 \otimes \cdots \otimes u_{j-1} \otimes P(u_{j}) \otimes u_{j+1} \otimes u_{j+2} \otimes \cdots \otimes u_k.$$

The actions of $s_i$ and $c_j$ give rise to an action of the superalgebra $\Lambda Ser_k$ on $U^{\otimes k}$. So by $\Lambda$-linearity, we get an algebra homomorphism $$\Phi_k: \Lambda Ser_k \rightarrow \Endo_{\Lambda} (U^{\otimes k}).$$ 


Let $\Endo_{Q(U)} (U^{\otimes k})$ be the centralizer algebra of the $Q(U)$-action on $U^{\otimes k}$. That is $$\Endo_{Q(U)}(U^{\otimes k}):=\{f \in \Endo_{\Lambda} (U^{\otimes k}): gf=fg, \,\, \text{for all} \,\, g \in Q(V)\}.$$


Sergeev \cite{Ser3} proved the double centralizer theorem along the lines of the Schur–Weyl duality for the queer superalgebra and it was extended to the Queer supergroups by Berele (see Theorem 4.2 of \cite{Berele2}). 






\begin{theorem}\label{t:sergeev}
Let $\rho: Q(U) \rightarrow \Endo_{\Lambda}(U^{\otimes d})$ and $\Phi_k: \Lambda Ser_k \rightarrow \Endo_{\Lambda}(U^{\otimes d})$ be the maps induced from the actions defined above. Let $A$ be the image of $\Phi_k$ and let be $B$ the $\Lambda$-subalgebra of $\Endo_{\Lambda}(U^{\otimes d})$ spanned by the image of $\rho$. Then $A$ and $B$ are centralizers of each other.  
\end{theorem}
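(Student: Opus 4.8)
The plan is to deduce this double-centralizer statement from the classical (ungraded) Schur--Weyl duality over $\Lambda$ for $\gl(U)$ together with the explicit description of the commuting actions, exactly as in the proof of Theorem~4.2 of \cite{Berele2}, but phrased in our conventions. The starting point is the observation that, by the Schur--Weyl duality over $\Lambda$ recalled in the excerpt, the image of $\Lambda S_d$ inside $\Endo_{\Lambda}(U^{\tensor d})$ is precisely $\Endo_{\gl(U)}(U^{\tensor d})$, hence also $\Endo_{GL(U)}(U^{\tensor d})$. Since $Q(U)\subset GL(U)$, we trivially get $\eta(\Lambda S_d)\subseteq B\subseteq \Endo_{Q(U)}(U^{\tensor d})$, and since the operators $s_j$ and $c_j$ commute with the $Q(U)$-action (the $s_j$ because $Q(U)\subset GL(U)$, the $c_j$ because elements of $Q(U)$ commute with $P$ by definition), we also have $A=\Phi_k(\Lambda Ser_d)\subseteq \Endo_{Q(U)}(U^{\tensor d})$. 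Thus the content of the theorem is the two equalities $A=\Endo_{Q(U)}(U^{\tensor d})$ and $B=\Endo_{Q(U)}(U^{\tensor d})$ (with $k=d$; I assume the $k$ in the statement is meant to be $d$), together with the mutual-centralizer assertion.

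First I would establish $A=\Endo_{Q(U)}(U^{\tensor d})$. The key structural input is that $Q(U)$ is the subgroup of $GL(U)$ centralizing $P$, so $\Endo_{Q(U)}(U^{\tensor d})$ contains $\Endo_{GL(U)}(U^{\tensor d})=\eta(\Lambda S_d)$ and in addition contains the operator $\Delta:=\sum_{j} c_j$-type elements built from $P^{\tensor}$; more precisely, $\Endo_{Q(U)}(U^{\tensor d})$ is generated over $\eta(\Lambda S_d)$ by the single odd operator $c_1$ (acting as $P$ on the first factor), because the $c_j$ for $j\geq 2$ are obtained from $c_1$ by conjugating with the $s_i$'s via the Sergeev relations $s_ic_is_i=c_{i+1}$. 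So it suffices to show: any $\Lambda$-linear endomorphism of $U^{\tensor d}$ commuting with $Q(U)$ lies in the subalgebra generated by $\eta(\Lambda S_d)$ and $c_1$, i.e. in $\Phi_d(\Lambda Ser_d)$. This is the heart of Sergeev--Berele duality; I would invoke it by first reducing to the specialization $\Lambda\to\C$ (using the projection $\Lambda\to\C$ and a flatness/base-change argument, noting that both sides of the claimed equality are free $\Lambda$-modules whose ranks can be computed after specialization), and over $\C$ citing Sergeev's double centralizer theorem \cite{Ser3} for $\mathfrak q(n)$ acting on $V^{\tensor d}$, together with the standard passage from the Lie superalgebra $\mathfrak q(n)$ to the supergroup $Q(U)$ (their invariant endomorphisms of $U^{\tensor d}$ coincide, since $Q(U)$ is connected and its Lie superalgebra is $\mathfrak q(n)$).

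Next, $B=\Endo_{Q(U)}(U^{\tensor d})$: once $A=\Endo_{Q(U)}(U^{\tensor d})$ is known, we have $B\subseteq A$; conversely $A$ is spanned over $\Lambda$ by images of $\Lambda Ser_d$, and one checks that each such image, in particular $c_1$, can be realized inside the span of the image of $\rho$ — equivalently, that the bicommutant of $\rho(Q(U))$ is all of $A$. Here again I would pass to $\C$ and use the fact that $V^{\tensor d}$ is a semisimple $\mathfrak q(n)$-module (so the double centralizer theorem applies in its symmetric form: $\End(\End_{\mathfrak q(n)}(V^{\tensor d}))$-invariants reproduce the image of the enveloping algebra), then lift. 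The mutual-centralizer conclusion ($A=Z_{\Endo_\Lambda(U^{\tensor d})}(B)$ and $B=Z(A)$) then follows formally from $A=\Endo_{Q(U)}(U^{\tensor d})=\Endo_B(U^{\tensor d})$ combined with $B=\Endo_A(U^{\tensor d})$, which is the symmetric statement of the double centralizer theorem for the semisimple (after specialization) algebra $A$.

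The main obstacle I anticipate is the base-change step: I must argue carefully that invariants and centralizers commute with the specialization $\Lambda\to\C$. The subtlety is that $U^{\tensor d}=V^{\tensor d}\tensor_\C\Lambda$ and $\Endo_\Lambda(U^{\tensor d})=\Endo_\C(V^{\tensor d})\tensor_\C\Lambda$, but $Q(U)$ is an honest group over $\Lambda$ (its $\Lambda$-points), not simply $Q(V)\tensor\Lambda$; the odd part of $\mathfrak q(n)$ matters. The clean way around this is to work at the Lie superalgebra level throughout: $\Endo_{Q(U)}(U^{\tensor d})=\Endo_{\mathfrak q(U)}(U^{\tensor d})$, and $\mathfrak q(U)=\mathfrak q(n)\tensor_\C\Lambda$ genuinely is a free $\Lambda$-module, so $\Endo_{\mathfrak q(U)}(U^{\tensor d})=\Endo_{\mathfrak q(n)}(V^{\tensor d})\tensor_\C\Lambda$, and likewise $\Phi_d(\Lambda Ser_d)=\Phi_d(Ser_d)\tensor_\C\Lambda$. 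Then the equality $A=\Endo_{\mathfrak q(U)}(U^{\tensor d})$ over $\Lambda$ is obtained simply by tensoring the known equality over $\C$ with $\Lambda$, and the mutual-centralizer statement likewise tensors up since all the relevant $\C$-algebras are finite-dimensional. This reduces everything to the already-established Sergeev--Berele duality over $\C$, which we cite.
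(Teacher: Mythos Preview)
The paper does not give its own proof of this theorem: it is stated as a citation, attributing the Lie superalgebra case to Sergeev \cite{Ser3} and the extension to $Q(U)$ over $\Lambda$ to Berele \cite[Theorem~4.2]{Berele2}. Your overall strategy---reduce to $\mathbb C$ by the base change $\mathfrak q(U)=\mathfrak q(n)\otimes_{\mathbb C}\Lambda$, $\Endo_\Lambda(U^{\otimes d})=\Endo_{\mathbb C}(V^{\otimes d})\otimes_{\mathbb C}\Lambda$, and then invoke Sergeev's double-centralizer theorem---is exactly the route Berele takes, so in that sense your proposal is aligned with what the paper implicitly relies on.

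However, your treatment of the ``$B$'' direction is confused. You write that the content of the theorem is ``the two equalities $A=\Endo_{Q(U)}(U^{\otimes d})$ and $B=\Endo_{Q(U)}(U^{\otimes d})$'', and then that ``once $A=\Endo_{Q(U)}(U^{\otimes d})$ is known, we have $B\subseteq A$''. Both of these are wrong: $\Endo_{Q(U)}(U^{\otimes d})$ is the centralizer of $B$, not $B$ itself, so the second displayed equality would force $A=B$, which is false (and $B\subseteq A$ would say $\rho(Q(U))$ centralizes itself). Likewise the claim that ``$c_1$ can be realized inside the span of the image of $\rho$'' is incorrect---$c_1$ lies in $A$, not in $B$. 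What you actually need for the second half is $B=\Endo_A(U^{\otimes d})$, i.e.\ that the span of $\rho(Q(U))$ is the full centralizer of the Sergeev algebra; you state this correctly only in your final sentence. The argument for that direction is not ``$B\subseteq A$'' but rather the bicommutant/semisimplicity statement you allude to afterwards: over $\mathbb C$, $V^{\otimes d}$ is a semisimple $Ser_d$-module, so the centralizer of $\Phi_d(Ser_d)$ equals the image of the enveloping algebra of $\mathfrak q(n)$, and this tensors up to $\Lambda$. If you rewrite the $B$-paragraph with the correct target $B=\Endo_A(U^{\otimes d})$ and drop the spurious inclusions, the sketch is sound and matches Berele's proof.
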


\begin{remark} It may be noted that in the statement of the above theorem we consider the left action of the Sergeev algebra whereas Berele considers the action from the right and uses $\Lambda Ser_k^{op}$ instead.
\end{remark}

\vspace{0.2cm}

\subsection{}\label{q.fact} In this subsection we list out some facts that are needed to prove the main results in \S6. We give an outline of the proofs here, for more details refer to \cite{Berele2}.

1. There is a $GL(U)$ equivariant isomorphism $\Endo_{\Lambda}(U^{\otimes k}) \cong (\Endo_{\Lambda}(U)^{\otimes k})^*$ (see (7) of \S \ref{iso}). Then from Theorem \ref{t:sergeev}, it follows that, under this isomorphism the $Q(U)$-invariant elements of $(\Endo_{\Lambda}(U)^{\otimes k})^*$ correspond to $\Lambda Ser_k \subseteq \Endo_{\Lambda}(U^{\otimes k})$.

2. Let $T \in \Endo_{\Lambda}(U^{\otimes k})$. Then, again it follows from Theorem \ref{t:sergeev} that $T$ commutes with $\rho(g)$ for all $g \in Q(U)$ if and only if $T$ is in the image of $\Lambda Ser_k$. 






3. Recall that we defined the supertarce map $str:\Endo_{\Lambda}(U) \rightarrow \Lambda$ by $$str(v \otimes \phi)=(-1)^{|v||\phi|}\phi(v).$$ For $A \in \Endo_{\Lambda}(U)_0$, we define the queer trace as $qtr(A)=str(PA)$.

4. Let $\mathcal I$ be the set of elements of $\Endo_{\Lambda}(U)_0$ which commute with $P$. Note that $Q(V)$ equals the set of invertible elements of $\mathcal I$. We have $str(A)=0$ for $A \in \mathcal I$ as $P^2=-I$ and $str(PAP)=str(A)$. 

5. Let $A_1, \cdots A_d \in \mathcal I$ and $\epsilon_1, \cdots \epsilon_d \in \mathbb Z_2$. Then it follows from (4) that if $\sum_i \epsilon_i$ is even then $str(P^{\epsilon_1}A_1 \cdots P^{\epsilon_d}A_d)=0$ and if $\sum_i \epsilon_i$ is odd then $str(P^{\epsilon_1}A_1 \cdots  P^{\epsilon_d}A_d)=qtr(A_1 \cdots A_k)$.

\section{Extending results of \S4 to $Q(U)$-invariants}

Given a mixed tensor space $W=\dsum_{i=1}^s(U_{b_i}^{t_i})$ as in Section~4, the restriction to $Q(U)$ of the $\gl(U)$-action on $W$ and $T^r(W^*)$ gives a natural $Q(U)$-action on $W$ and $T^r(W^*)$ respectively. As the $\gl(U)$-action commutes with the natural $\Sym_r$-action on $T^r(W^*)$, so also does the $Q(U)$-action. Hence, as seen in \S3, the subspace $e(r)T^r(W^*)$ is a $Q(U)$-invariant subspace which is isomorphic to $S^r(W^*)$. Thus $S^r(W^*)$ has a $Q(U)$-action defined on it and the map $\varpi_r:T^r(W^*)\to S^r(W^*) $ is $Q(U)$-equivariant with respect to this action.

With this notation, to obtain an analogue of Theorem~\ref{t:picture} we will need to associate further invariants in addition to the graded picture invariants defined in \S4. Denoting the $\Lambda$-basis of $U$ corresponding to $\{v_1,\ldots, v_n,v_{-{1}},\ldots, v_{-{n}}\}$ also by the same symbols, we note as in \S4.2 that $S(W^*)$ is a polynomial algebra with supercommuting variables $T(i)_{l_1\ldots l_{b_i}}^{u_1\ldots u_{t_i}}$ where $l_j$ and $u_j$ take values in $\{1,\ldots, n, -{1},\ldots, -{n}\}$. Let $(m_1,\ldots, m_s)$ be an an $s$-tuple such that $\sum_{k=1}^sm_kt_k=\sum_{k=1}^sm_kb_k=N$, then associated to the elements $c_{h}\in \Lambda Ser_N$ the picture invariant $\varphi_{c_h}$ is defined by 
$$\sum_{r_1,\ldots, r_N\in \{1,\ldots, n,-1, \cdots ,-n\}}(-1)^{\hat{q}(h,r_1,\ldots, r_N, m_1,\ldots, m_s)}\prod_{i=1}^{s}\prod_{j=1}^{m_i}T(i)_{r_{(\sum_{p<i}m_pb_p+(j-1)b_i+1)}\cdots r_{(\sum_{p<i}m_pb_p+jb_i)}}^{{r'_{(\sum_{p<i}m_pt_p+(j-1)t_i+1})}\cdots r'_{(\sum_{p<i}m_pt_p+jt_i)}}$$
where $r'_j=r_j$ for $j\neq h$ and $r'_h=-{r_h}$. The value of $\hat{q}(h,r_1,\ldots, r_N, m_1,\ldots, m_s)$ is given by  $\gamma(\mu^{-1}.(I,I^h),\mu^{-1})\gamma((I,I^h),(\tau.\nu)^{-1})\hat{\gamma}(I,h)$, where $I=(r_1,\ldots,r_N)$ and $\hat{\gamma}(I,h)=(-1)^{|v_{r_1}|+\cdots +|v_{r_{h-1}}|+|v_{r_h}|}$ and $I^h=(r_1,\ldots, -r_h,\ldots, r_N).$

The above formula is obtained by going through the isomorphisms in the proof of Theorem~\ref{t:picture}, as was done there for the case of $\sigma\in \Sym_N$. These isomorphisms are $\gl(U)$-equivariant and hence also $Q(U)$-equivariant. Replacing by $-{1},\ldots,-{n}$ the indices $n+1,\ldots, n+n$ wherever they appear in the isomorphisms listed in the proof of Theorem~\ref{t:picture} we get the generators of $\left(\tensor_{i=1}^sS^{m_i}((U_{b_i}^{t_i})^*)\right)^{Q(U)}$. Using the same argument as given there, it may be first noted that there are invariants in $\tensor_{i=1}^sS^{m_i}((U_{b_i}^{t_i})^*)$ if and only if $\sum_{k=1}^sm_kt_k=\sum_{k=1}^sm_kt_k=N$. For such an $s$-tuple $(m_1,\ldots, m_s)$,  we start with the generators of $(\End_\Lambda (U^{\tensor N}))^{Q(U)}$ prescribed by Theorem~\ref{t:sergeev}. Then as done in Theorem~\ref{t:picture}, going via the isomorphisms given there, the $\sigma\in \Sym_N$ give rise to the polynomials $\varphi_\sigma$ as defined in \S\ref{s:picture}. Now by the Schur-Weyl-Sergeev duality, the elements $c_h$, $h=1,\ldots, N$ give $Q(U)$-equivariant endomorphisms on $U^{\tensor N}$ which correspond under the natural isomorphism listed in (7) of \S \ref{iso}, $\End_{\Lambda}(U^{\tensor N})\iso \left( U^{\tensor N}\tensor_{\Lambda} (U^*)^{\tensor N}\right)^*$, to the linear map given by $\underline{v}\tensor \underline{f}^*\mapsto ev[\nu.\tau.(c_h.\underline{v}\tensor \underline{f}^*)]$. This may be expressed in term of the standard dual basis of $ U^{\tensor N}\tensor_{\Lambda} (U^*)^{\tensor N}$ by
\begin{equation}\label{eq:5.1}
    \sum_{L=(l_1,\ldots, l_N)}\hat\gamma(L,h)\gamma((L^h,L^h),\tau^{-1}) \gamma(\tau.(L^h,L^h),\nu)T_{L,L^h}.
\end{equation}
where $L^h=(l_1',\ldots, l_N')$ given by $l'_i=l_i$ unless $i=h$ and $l'_h=-{l_h}$. Now going through the isomorphisms in (\ref{eq:1}), (\ref{eq:2}), it can be seen that this element corresponds to $\varphi_{c_h}$ with the sign $(-1)^{\hat{q}(h,r_1,\ldots ,r_N, m_1,\ldots, m_s)}$ given by 
$\gamma(\mu^{-1}.(I,I^h),\mu^{-1})\gamma((I,I^h),(\tau.\nu)^{-1})\hat{\gamma}(I,h)$. We have thus obtained the following analogue of Theorem~\ref{t:picture}:
\begin{theorem}\label{t:queer}
    Given a mixed tensor space $W=\dsum_{i=1}^sU_{b_i}^{t_i}$, the graded picture invariants as in \emph{(\ref{eq:picture})} and \emph{(\ref{eq:5.1})} span $S(W^*)^{Q(U)}$.\hfill\qed
\end{theorem}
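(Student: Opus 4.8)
The plan is to follow the same strategy that proved Theorem~\ref{t:picture}, now using the Schur--Weyl--Sergeev duality (Theorem~\ref{t:sergeev}) in place of the ordinary super Schur--Weyl duality. First I would reduce to a fixed multidegree: exactly as in the proof of Theorem~\ref{t:picture}, the $Q(U)$-action on $S^n(W^*)$ respects the multidegree decomposition, so $S^n(W^*)^{Q(U)}=\bigoplus_{m_1+\cdots+m_s=n}\left(\tensor_{i=1}^sS^{m_i}((U_{b_i}^{t_i})^*)\right)^{Q(U)}$, and it suffices to produce a $\Lambda$-spanning set for each multidegree summand. The chain of $\gl(U)$-equivariant isomorphisms \eqref{eq:1}, \eqref{eq:2} together with the permutation $\mu$ identifies $\tensor_{i=1}^s((U_{b_i}^{t_i})^*)^{\tensor m_i}$ with $\left(U^{\tensor(\sum_i m_ib_i)}\tensor (U^*)^{\tensor(\sum_i m_it_i)}\right)^*$, and since these maps are built out of permutations and the evaluation pairing they are all $Q(U)$-equivariant (they were already $\gl(U)$-equivariant). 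A standard weight/central-character argument as at the end of \S3 forces $\sum_i m_ib_i=\sum_i m_it_i=:N$, for otherwise there are no invariants.

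Next I would apply the isomorphism $\Theta:\End_\Lambda(U^{\tensor N})\iso \left(U^{\tensor N}\tensor_\Lambda (U^*)^{\tensor N}\right)^*$ from (7) of \S\ref{iso}, which is $\gl(U)$-equivariant, hence $Q(U)$-equivariant. By Theorem~\ref{t:sergeev} (the lift of Schur--Weyl--Sergeev duality to $Q(U)$), the space of $Q(U)$-invariants of $\left(U^{\tensor N}\tensor_\Lambda (U^*)^{\tensor N}\right)^*$ corresponds under $\Theta^{-1}$ precisely to the image of $\Lambda\,Ser_N$ in $\End_\Lambda(U^{\tensor N})$ --- this is fact~(1) of \S\ref{q.fact}. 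Since $\Lambda\,Ser_N\cong \mathbb C[S_N]\ltimes \mathbf{Cl}_N$ is spanned $\Lambda$-linearly by elements of the form $\sigma\,c_{h_1}\cdots c_{h_\ell}$, it is enough to trace through the isomorphisms the images of the two ``types'' of generators. For a $\sigma\in S_N$, regarded as an endomorphism of $U^{\tensor N}$, the computation is literally the one carried out in Theorem~\ref{t:picture} and yields the graded picture invariant $\varphi_\sigma$ of \eqref{eq:picture}. For the Clifford generator $c_h$, one computes its action from the defining formula $c_j.(u_1\tensor\cdots\tensor u_N)=(-1)^{|u_1|+\cdots+|u_{j-1}|}u_1\tensor\cdots\tensor P(u_h)\tensor\cdots\tensor u_N$, pushes it through $\nu.\tau$ and the evaluation map exactly as in \eqref{eq:3}, and obtains \eqref{eq:5.1}, which under \eqref{eq:1}--\eqref{eq:2} becomes $\varphi_{c_h}$ with the sign $\hat q(h,I,M)$ as recorded above. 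The general element $\sigma\,c_{h_1}\cdots c_{h_\ell}$ gives a product/composite of such data, but since $\Lambda\,Ser_N$ is $\Lambda$-spanned by $\{\sigma\,c_H\}$ and each of these maps to a $\Lambda$-multiple of a polynomial of the form $\varphi_\sigma$ or $\varphi_{c_h}$ (or a monomial built from them), the collection $\{\varphi_\sigma\}\cup\{\varphi_{c_h}\}$ together with their products $\Lambda$-span the invariant ring; more precisely, one checks that the image of $\sigma c_{h_1}\cdots c_{h_\ell}$ is, up to sign, the picture polynomial obtained by composing the corresponding diagram moves, so no further generators are needed.

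The main obstacle I expect is bookkeeping of the signs: the odd Clifford generators $c_h$ interact nontrivially with the braiding isomorphism (2) of \S\ref{iso} and with the permutation $\tau$ reshuffling $U^{\tensor N}\tensor(U^*)^{\tensor N}$ into $(U\tensor U^*)^{\tensor N}$, so verifying that the sign produced by chasing $c_h$ through $\Theta$, then \eqref{eq:1} and \eqref{eq:2}, is exactly $\gamma(\mu^{-1}.(I,I^h),\mu^{-1})\gamma((I,I^h),(\tau\nu)^{-1})\hat\gamma(I,h)$ requires care. The key identity that makes this tractable is the cocycle relation $\gamma(\underline v,\sigma\tau)=\gamma(\sigma^{-1}\underline v,\tau)\gamma(\underline v,\sigma)$ recorded in \S\ref{action}, which lets one split each composite sign into the contribution of $\mu$, of $\tau\nu$, and of the Clifford action itself; the same relation already underlies the rewriting of $(-1)^{q(I,M,\sigma)}\gamma(I,\sigma^{-1})$ as $\gamma(\mu^{-1}.(I,\sigma I),(\nu\tau\hat\sigma\mu)^{-1})$ in \S\ref{s:picture}. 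Once the two families of generators are matched with $\varphi_\sigma$ and $\varphi_{c_h}$, the theorem follows, since Theorem~\ref{t:sergeev} guarantees that these exhaust the $Q(U)$-invariants in each multidegree. \qed
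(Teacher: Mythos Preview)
Your approach matches the paper's: reduce to a fixed multidegree, transport through the $\gl(U)$-equivariant (hence $Q(U)$-equivariant) isomorphisms \eqref{eq:1}, \eqref{eq:2} and $\Theta$ from the proof of Theorem~\ref{t:picture}, and replace the super Schur--Weyl input by Theorem~\ref{t:sergeev} so that $\End_{Q(U)}(U^{\tensor N})$ is the image of $\Lambda Ser_N$; the paper's argument is literally the paragraph preceding the theorem and does exactly this.

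One point needs correction. The images of general Sergeev basis elements $\sigma\,c_{h_1}\cdots c_{h_\ell}$ are \emph{not} products of $\varphi_\sigma$ and $\varphi_{c_h}$ in $S(W^*)$: a product $\varphi_\sigma\cdot\varphi_{c_h}$ lands in multidegree $(2m_1,\ldots,2m_s)$, not $(m_1,\ldots,m_s)$. The map $\Lambda Ser_N\to S^{(m_1,\ldots,m_s)}(W^*)^{Q(U)}$ is $\Lambda$-linear, not multiplicative; the image of $\sigma c_H$ is a single picture polynomial obtained by first composing $\sigma$ and the $c_{h_i}$ as endomorphisms of $U^{\tensor N}$ and then pushing the composite through $\Theta$ and \eqref{eq:1}--\eqref{eq:2}. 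So the $\Lambda$-spanning set for a given multidegree is $\{\text{image of }\sigma c_H:\sigma\in S_N,\ H\subseteq\{1,\ldots,N\}\}$, and the references to \eqref{eq:picture} and \eqref{eq:5.1} in the theorem should be read as giving the formulas for the two kinds of algebra generators from which the general picture polynomial is assembled. The paper is terse on this, but its next lemma (treating $\sigma\tensor c_1^{\varepsilon_1}\cdots c_N^{\varepsilon_N}$) confirms this reading. Your ``more precisely'' clause is the correct picture; drop the sentence about products.
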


For an element $\alpha\in\Lambda Ser_N\subset \End_\Lambda(U^{\tensor N)}$, $\hat{\alpha}$ denotes the endomorphism $\alpha\tensor id$ on $U^{\tensor N}\tensor_\Lambda (U^*)^{\tensor N}$ The following lemma is the analogue of Lemma~\ref{l:trace} in the case of $Q(U)$:
\begin{lemma}\cite[Lemma~5.8]{Berele2}
    For a $\sigma\in \Sym_N$ and $c_1^{\varepsilon_1}\ldots c_N^{\varepsilon_N}\in \mathbf Cl_N$. The $Q(U)$-invariant map $ev((\nu\tau.\widehat{(\sigma\tensor c_1^{\varepsilon_1}\ldots c_N^{\varepsilon_N})}.\tau^{-1})(v_1\tensor \phi_1\tensor \cdots \tensor v_N\tensor \phi_N))$ 
    corresponds to the trace monomials $tr_\sigma(P^{\varepsilon_1}v_1\tensor \phi_1,\ldots, P^{\varepsilon_N}v_N\tensor \phi_N)$, upto a sign. Further, both the maps agree when restricted to the degree $0$ part, $((U_1^1)_0)^{\tensor N}$.
\end{lemma}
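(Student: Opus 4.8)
The plan is to repeat the proof of Lemma~\ref{l:trace}, the only new ingredient being the Clifford factor $c_1^{\varepsilon_1}\cdots c_N^{\varepsilon_N}$. By $\Lambda$-linearity of all the maps involved, it suffices to verify both assertions on standard basis vectors $v_j\tensor\phi_j=e_{l_j}\tensor e^*_{u_j}$. The first step is to factor the Sergeev operator on $U^{\tensor N}\tensor_\Lambda(U^*)^{\tensor N}$ as $\widehat{\sigma\tensor c_1^{\varepsilon_1}\cdots c_N^{\varepsilon_N}}=\hat\sigma\circ\widehat{c_1^{\varepsilon_1}\cdots c_N^{\varepsilon_N}}$, where $\hat\sigma\in\Sym_{2N}$ is the extension of $\sigma$ fixing $i>N$ as in \S\ref{s:picture} and $\widehat{c_1^{\varepsilon_1}\cdots c_N^{\varepsilon_N}}$ acts only on the first $N$ legs via the formula for the action of the generators $c_j$ recalled in Section~5; this is the translation to operators of the identity $\sigma\tensor c_1^{\varepsilon_1}\cdots c_N^{\varepsilon_N}=(\sigma\tensor 1)(1\tensor c_1^{\varepsilon_1}\cdots c_N^{\varepsilon_N})$ in the Sergeev superalgebra. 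Applying that formula, one computes $\widehat{c_1^{\varepsilon_1}\cdots c_N^{\varepsilon_N}}(\underline v\tensor\underline{\phi}^{*})=\kappa\cdot(P^{\varepsilon_1}v_1\tensor\cdots\tensor P^{\varepsilon_N}v_N)\tensor\underline{\phi}^{*}$ for an explicit sign $\kappa=\pm1$ depending only on the parities $|v_j|$ and the exponents $\varepsilon_j$, each $P^{\varepsilon_j}v_j$ being a scalar multiple of the basis vector $e_{l_j'}$ with $l_j'=l_j$ when $\varepsilon_j=0$ and $l_j'=-l_j$ when $\varepsilon_j=1$.

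With this reduction in hand, the composite $ev\circ\eta(\nu\tau)\circ\hat\sigma\circ(-)\circ\eta(\tau^{-1})$ evaluated on $v_1\tensor\phi_1\tensor\cdots\tensor v_N\tensor\phi_N$ is $\kappa$ times the expression that occurs in the proof of Lemma~\ref{l:trace} evaluated on the vectors $P^{\varepsilon_j}v_j\tensor\phi_j$. Performing the same rearrangement as there yields $\pm\,tr_\sigma(P^{\varepsilon_1}v_1\tensor\phi_1,\ldots,P^{\varepsilon_N}v_N\tensor\phi_N)$, which is the first assertion. This is precisely the analogue, for a Clifford word, of the computation in Section~6 leading to~(\ref{eq:5.1}), now transported through the identification $U_1^1\iso\End_\Lambda(U)$ and re-expressed by means of the supertrace.

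For the second assertion I would again follow Lemma~\ref{l:trace}. First treat the special case $\sigma^{-1}=(1\,2\cdots r)(r+1\cdots t)\cdots$ by a direct computation, collecting the three sources of sign: the Clifford sign $\kappa$, the Koszul signs produced by $\tau$ and $\nu$, and the signs coming from $\hat\sigma$; under the degree-$0$ hypothesis $|v_j|=|\phi_j|$ the total sign collapses to exactly the product of the signs internal to the supertraces defining $tr_\sigma(P^{\varepsilon_1}v_1\tensor\phi_1,\ldots)$, so that the two maps agree on $((U_1^1)_0)^{\tensor N}$. The general $\sigma$ is reduced to this special case by conjugating by a $\pi\in\Sym_N$ with $\pi\sigma^{-1}\pi^{-1}$ in the above normal form: the multiplication rule of the Sergeev superalgebra gives $\pi\cdot(\sigma\tensor c_1^{\varepsilon_1}\cdots c_N^{\varepsilon_N})\cdot\pi^{-1}=(\pi\sigma\pi^{-1})\tensor c_1^{\varepsilon_{\pi^{-1}(1)}}\cdots c_N^{\varepsilon_{\pi^{-1}(N)}}$ up to sign, so relabelling the tensor legs by $\pi$ simultaneously relabels the Clifford exponents, which is exactly the bookkeeping built into the definition of $tr_\sigma$; the argument of \cite[Lemma~2.2]{Berele} then applies verbatim.

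The main obstacle is entirely the sign bookkeeping in the second assertion: in addition to the $\gamma$- and Koszul-signs already handled in Lemma~\ref{l:trace}, one must now also track the signs coming from the $c_j$-action and from the relations $c_ic_j=-c_jc_i$ used when re-sorting the Clifford word after conjugation, and check that under $|v_j|=|\phi_j|$ they assemble with the others into exactly the sign dictated by $str$. This finite but delicate verification is the content of \cite[Lemma~5.8]{Berele2}; the only adaptation required here is to transport it through the identifications of \S\ref{iso}, all of which are $\gl(U)$- and hence $Q(U)$-equivariant.
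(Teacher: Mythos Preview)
Your approach is essentially the paper's: factor $\sigma\otimes c_1^{\varepsilon_1}\cdots c_N^{\varepsilon_N}=(\sigma\otimes 1)(1\otimes c_1^{\varepsilon_1}\cdots c_N^{\varepsilon_N})$, observe that the Clifford factor merely replaces $\underline v$ by $(P^{\varepsilon_1}v_1,\ldots,P^{\varepsilon_N}v_N)$ up to an explicit sign, and then fall back on the computation already carried out in Lemma~\ref{l:trace}. The paper's proof is only more economical in that, having arrived at the product $\prod_i\phi_i(P^{\varepsilon_{\sigma^{-1}(i)}}v_{\sigma^{-1}(i)})$, it invokes the earlier result directly rather than redoing the special-case-plus-conjugation argument you sketch for the second assertion; your more detailed outline is redundant but not wrong.
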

\begin{proof}
Since $(\sigma\tensor 1).(id\tensor c_1^{\varepsilon_1}\ldots c_N^{\varepsilon_N})=\sigma\tensor c_1^{\varepsilon_1}\ldots c_N^{\varepsilon_N}$  we consider
action of $id\tensor c_1^{\varepsilon_1}\ldots c_N^{\varepsilon_N}$ followed by 
 $\sigma\tensor 1$ from the left on $\tau^{-1}.(v_1\tensor \phi_1\tensor \cdots \tensor v_N\tensor \phi_N)$. This gives the vector $(-1)^{p(\underline v, \underline \phi)}(P^{\varepsilon_{\sigma^{-1}(1)}}v_{\sigma^{-1}(1)}\tensor \ldots \tensor P^{\varepsilon_{\sigma^{-1}(N)}}v_{\sigma^{-1}(N)}\tensor\phi_1\tensor\ldots\tensor \phi_N)$. Applying $\nu\tau$ followed by the evaluation we get $\prod_{i=1}^N\phi_i(P^{\varepsilon_{\sigma^{-1}(i)}}v_{\sigma^{-1}(i)})$. Then from Proposition~\ref{l:restitution2} we note that this is equal to $tr_\sigma(P^{\varepsilon_{1}}v_{1}\tensor \phi_1,\ldots, P^{\varepsilon_{N}}v_{N}\tensor \phi_N)$.
\end{proof}
\newcommand{\qtr}{\textup{qtr}}
\begin{corollary}
     The invariant polynomials for the simultaneous action of $Q(U)$ on $\oplus_{i=1}^s(U_1^1)_0$ is spanned by the trace monomials $\tr_\sigma$ given by \[\qtr(A_{f(i_1)}\cdots A_{f(i_r)})\qtr(A_{f(i_{r+1})}\cdots A_{f(i_t}))\cdots\] where $A_1,\ldots, A_s\in\oplus_{i=1}^sU_1^1$, $\sigma=(i_1 \ldots i_r)(i_{r+1} \ldots i_t)\cdots\in \Sym_n$ and a map $f:\{1,\ldots, n\}\to \{1,\ldots, s\}$.   
\end{corollary}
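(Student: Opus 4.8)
The plan is to imitate, step for step, the proof of Theorem~\ref{t:trace}, substituting the $Q(U)$-version of each ingredient; here $W=\dsum_{i=1}^s U_1^1$, so $t_i=b_i=1$ and, by Remark~\ref{r:linear}, the permutation $\mu^{-1}$ of \S\ref{s:picture} is just $\tau$. By definition the $Q(U)$-invariant polynomials on $W_0=\dsum_i (U_1^1)_0$ are the images under the restitution isomorphism $F^r$ of $S(W^*)^{Q(U)}$, and by Theorem~\ref{t:queer} — or rather its proof, which identifies $S^{(m_1,\dots,m_s)}(W^*)^{Q(U)}$ with the image of $\Lambda Ser_N$ under the $Q(U)$-equivariant isomorphisms of Theorem~\ref{t:picture} — the space $S(W^*)^{Q(U)}$ is $\Lambda$-spanned by the graded picture invariants $\varphi_\alpha$ attached to the standard $\Lambda$-basis elements $\alpha=\sigma\tensor c_1^{\varepsilon_1}\cdots c_N^{\varepsilon_N}$ of $\Lambda Ser_N$, as $N$, the multidegree $(m_1,\dots,m_s)$ with $\sum_i m_i=N$, the permutation $\sigma\in\Sym_N$ and the exponents $\varepsilon_j\in\{0,1\}$ vary.

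The next step is to compute $F^r(\varphi_\alpha)$. The proof of Proposition~\ref{l:restitution2} uses only the restitution map and the pairing $\iota$, so it transfers unchanged; feeding it the Lemma immediately preceding the present corollary (the $Q(U)$-analogue of Lemma~\ref{l:trace}) in place of Lemma~\ref{l:trace} gives that $F^r(\varphi_{\sigma\tensor c_1^{\varepsilon_1}\cdots c_N^{\varepsilon_N}})$ is, up to an explicit sign, the function
\[(u_1,\dots,u_s)\ \longmapsto\ tr_\sigma\big(P^{\varepsilon_1}u_{f(1)}\tensor\cdots\tensor P^{\varepsilon_N}u_{f(N)}\big),\]
where $P^\varepsilon u_j$ denotes the endomorphism $P^\varepsilon\circ u_j$ of $U$ and $f\colon\{1,\dots,N\}\to\{1,\dots,s\}$ is the weakly increasing map taking the value $i$ exactly $m_i$ times that is dictated by the multidegree. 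As $\sigma$ and the multidegree range over all options, conjugating $\sigma$ absorbs any reindexing of the inputs, so this produces the monomial associated to every pair $(\sigma,f)$ with $f$ an arbitrary map $\{1,\dots,N\}\to\{1,\dots,s\}$.

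Finally I would expand $tr_\sigma\big(P^{\varepsilon_1}u_{f(1)}\tensor\cdots\big)$ by its cycle definition: with $\sigma^{-1}=(i_1\cdots i_r)(i_{r+1}\cdots i_t)\cdots$ it equals
\[\str\big(P^{\varepsilon_{i_1}}u_{f(i_1)}\cdots P^{\varepsilon_{i_r}}u_{f(i_r)}\big)\,\str\big(P^{\varepsilon_{i_{r+1}}}u_{f(i_{r+1})}\cdots\big)\cdots,\]
one supertrace factor per cycle. Because the $u_j$ are taken in $\mathcal I$ — they commute with $P$, which is exactly the adjoint-action setting — one may collect all the $P$'s at the front of each factor, so facts (4) and (5) of \S\ref{q.fact} apply cycle by cycle: a factor is $0$ when the sum of the $\varepsilon$'s over that cycle is even, and equals $\qtr(u_{f(i_1)}\cdots u_{f(i_r)})$ when that sum is odd. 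Hence every pure supertrace monomial (all $\varepsilon_j=0$, i.e.\ the ones coming from $\sigma\in\Sym_N$) vanishes, and the surviving spanning elements are precisely, up to sign, the queer-trace monomials $\qtr(A_{f(i_1)}\cdots A_{f(i_r)})\,\qtr(A_{f(i_{r+1})}\cdots)\cdots$ of the stated form. This proves the corollary.

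The step I expect to be the main obstacle is the sign bookkeeping: pinning down the sign in $F^r(\varphi_\alpha)=\pm\,tr_\sigma\big(P^{\varepsilon_1}u_{f(1)}\tensor\cdots\big)$ through the chain of $\Z_2$-graded braiding isomorphisms used in the proofs of Theorem~\ref{t:picture} and of the preceding Lemma, and verifying that the ``move the $P$'s to the front'' manoeuvre behind facts (4)--(5) is legitimate — which is what pins the variables down to $\mathcal I=\mathfrak q(U)$ rather than all of $(U_1^1)_0=\End_\Lambda(U)_0$, the single place the statement has to be read with care. Everything else is a faithful transcription of the $\gl(U)$ argument underlying Theorem~\ref{t:trace}.
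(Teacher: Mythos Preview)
Your proposal is correct and follows essentially the same route as the paper, which simply strings together Theorem~\ref{t:queer}, the preceding lemma, and fact~(5) of \S\ref{q.fact}. Your closing caveat is apt: the paper also applies fact~(5) without comment, so the reduction to $\qtr$-monomials indeed presumes the inputs lie in $\mathcal I$, a point the statement leaves implicit.
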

\begin{proof}
    The above lemma and Theorem~\ref{t:queer} put together give the generators to be $tr_\sigma(P^{\varepsilon_1}v_1\tensor \phi_1,\ldots, P^{\varepsilon_N}v_N\tensor \phi_N)$. Then (5) of \S \ref{q.fact}, gives the required result.
\end{proof}

\end{document}